\newtheorem{theorem}{Theorem}[section]
\newtheorem{lemma}[theorem]{Lemma}
\newtheorem{proposition}[theorem]{Proposition}
\newtheorem{corollary}[theorem]{Corollary}
\theoremstyle{definition}
\newtheorem{definition}[theorem]{Definition}
\newtheorem{example}[theorem]{Example}
\theoremstyle{remark}
\newtheorem{remark}[theorem]{Remark}
\numberwithin{equation}{section}
\newcommand{\EE}{\mathcal{E}}
\newcommand{\FF}{\mathcal{F}}
\newcommand{\LL}{\mathcal{L}}
\newcommand{\PP}{\mathcal{P}}
\newcommand{\QQ}{\mathcal{Q}}
\renewcommand{\SS}{\mathscr{S}}
\newcommand{\field}[1]{\mathbb{#1}}
\newcommand{\R}{\field{R}}
\newcommand{\N}{\field{N}}
\newcommand{\E}{\field{E}}
\renewcommand{\P}{\field{P}}
\newcommand{\al}{\alpha}
\newcommand{\ep}{\varepsilon}
\newcommand{\la}{\lambda}
\newcommand{\Si}{\Sigma}
\newcommand{\Om}{\Omega}
\newcommand{\sm}{\setminus}
\begin{document}

\setcounter{page}{1}

\title[Markov chains under nonlinear expectation]{Markov chains under nonlinear expectation}

\author[M.~Nendel]{Max Nendel}
\address{Max Nendel, Bielefeld University, Center for Mathematical Economics, Bielefeld, Germany}
\email{max.nendel@uni-bielefeld.de}


\let\thefootnote\relax\footnote{The author would like to thank Robert Denk and Michael Kupper for their valuable comments, support and suggestions related to this work. Financial support through the German Research Foundation via CRC 1283 is gratefully acknowledged.}




\date{\today}

\begin{abstract}
  In this paper, we consider continuous-time Markov chains with a finite state space under nonlinear expectations.~We define so-called $Q$-operators as an extension of $Q$-matrices or rate matrices to a nonlinear setup, where the nonlinearity is due to model uncertainty. The main result gives a full characterization of convex $Q$-operators in terms of a positive maximum principle, a dual representation by means of $Q$-matrices, continuous-time Markov chains under convex expectations and nonlinear ordinary differential equations. This extends a classical characterization of generators of Markov chains to the case of model uncertainty in the generator. We further derive a primal and dual representation of the convex semigroup arising from a Markov chain under a convex expectation via the Fenchel-Legendre transformation of its generator. We illustrate the results with several numerical examples, where we compute price bounds for European contingent claims under model uncertainty in terms of the rate matrix.\\
\smallskip
\noindent \textit{Key words:} Nonlinear expectation, imprecise Markov chain, model uncertainty, nonlinear ODE, generator of a nonlinear semigroup

\smallskip
\noindent \emph{AMS 2010 Subject Classification:} 60J27; 60J35; 47H20; 34A34

\end{abstract}

\maketitle

\section{Introduction and main result}

The notion of a nonlinear expectation was introduced by Peng \cite{MR2143645}. Since then, nonlinear expectations have been widely used in order to describe model uncertainty in a probabilistic framework. In mathematical finance, model uncertainty or ambiguity typically appears due to incomplete information in the financial market. In contrast to other disciplines like physics, where experiments can be repeated under similar conditions arbitrarily often, financial markets evolve dynamically and usually do not allow for repetition. A typical example for ambiguity is uncertainty with respect to the parameters (drift, volatility, etc.) of the driving process, which appears when, due to statistical estimation methods, only a confidence interval for the parameter is known. From a theoretical point of view, this leads to the task of modelling stochastic processes under incomplete information (imprecision). This topic has been extensively investigated since the beginning of this century. Prominent examples of nonlinear expectations related to stochastic processes, appearing in mathematical finance, are the g-expectation, see Coquet et al.~\cite{MR1906435}, the G-expectation or $G$-Brownian motion introduced by Peng \cite{MR2397805},\cite{MR2474349} and $G$-L\'evy processes, cf. Hu and Peng \cite{PengHu}, Neufeld and Nutz \cite{NutzNeuf} and Denk et al.~\cite{dkn2}. Concepts that are related to nonlinear expectations are coherent monetary risk measures as introduced by Artzner et al.~\cite{MR1850791} and Delbaen \cite{MR2011534},\cite{MR1929369}, convex monetary risk measures introduced by F\"ollmer and Schied \cite{MR1932379} and Frittelli and Rosazza Gianin \cite{frittelli2002putting}, coherent upper previsions introduced by Walley \cite{MR1145491}, convex upper previsions cf.~Pelessoni and Vicig \cite{MR2027088},\cite{MR2139424}. Further concepts, that describe model uncertainty, are Choquet capacites (see e.g. Dellacherie and Meyer \cite{MR521810}), game-theoretic probability by Vovk and Shafer \cite{MR3287403} and niveloids (see e.g. Cerreia-Vioglio et al.~\cite{MR3153589}).\\

In \cite{MR2143645}, Peng introduces a first notion of Markov chains under nonlinear expectations.~However, the existence of stochastic processes under nonlinear expectations has only been considered in terms of finite dimensional nonlinear marginal distributions, whereas completely path-dependent functionals could not be regarded. Markov chains under model uncertainty have been considered amongst others by Hartfiel \cite{MR1725607}, {\v{S}}kulj \cite{MR2556123} and De Cooman et al.~\cite{MR2535022}. Hartfiel \cite{MR1725607} considers so-called Markov set-chains in discrete time, using matrix intervals in order to describe model uncertainty in the transition matrices. Later, {\v{S}}kulj \cite{MR2556123} approached Markov chains under model uncertainty using Choquet capacities, which results in higher-dimensional matrices on the power set, while De Cooman et al. \cite{MR2535022} considered imprecise Markov chains using an operator-theoretic approach with upper and lower expectations. In \cite[Example 5.3]{MR3824739}, Denk et al.~describe model uncertainty in the transition matrix via a nonlinear transition operator, which, together with the results obtained in \cite{MR3824739}, allows the construction of discrete-time Markov chains on the canonical path space. In continuous time, in particular, computational aspects of sublinear imprecise Markov chains, have been studied amongst others by Krak et al.~ \cite{MR3679158} and {\v{S}}kulj \cite{MR3285527}.\\

In this paper, we investigate continuous-time Markov chains with a finite state space under convex expectations. The main tools, we use in our analysis, are convex duality, so-called Nisio semigroups (cf. Nisio \cite{MR3290231}, Denk et al.~\cite{dkn2}, Nendel and R\"ockner \cite{roecknen}) and a convex version of Kolmogorov's extension theorem, see Denk et al.~\cite{MR3824739}. Restricting the time parameter in the present work to the set of natural numbers leads to a discrete-time Markov chain, in the sense of \cite[Example 5.3]{MR3824739}. A concept that is related to Markov chains under nonlinear expectations are backward stochastic differential equations (BSDEs) on Markov chains by Cohen and Elliot \cite{MR2446692},\cite{MR2594366}.\\

The aim of this paper is to extend the classical relation between Markov chains, $Q$-matrices or rate matrices and ordinary differential equations to the case of model uncertainty. This allows to compute price bounds for contingent claims under model uncertainty numerically by solving ordinary differential equations using Runge-Kutta methods or, in the simplest case, the Euler method. In Section \ref{sec:numex} (see Example \ref{ex:numode}), we illustrate how this method can be used to compute price bounds for European contingent claims, where we consider an underlying Markov chain, which is, firstly, a discrete version of a Brownian motion with uncertain drift (cf.~Coquet et al.~\cite{MR1906435}) and, secondly, a discrete version of a Brownian motion with uncertain volatility (cf.~Peng \cite{MR2397805},\cite{MR2474349}). The aforementioned pricing ODE related to a Markov chain under a nonlinear expectation is a spatially discretized version of a Hamilton-Jacobi-Bellman equation, and is related, via a dual representation, to a control problem where, roughly speaking, ``nature'' tries to control the system into the worst possible scenario (see Remark \ref{rem:dualrep}). The dual representation as a control problem gives rise to a second numerical scheme for the computation of prices of European contingent claims under model uncertainty, which is also discussed in Section \ref{sec:numex} (see Example \ref{ex:numdualrep}).\\

Given a measurable space $(\Om,\FF)$, we denote the space of all bounded measurable functions $\Om\to \R$ by $\LL^\infty(\Om,\FF)$. A \textit{nonlinear expectation} is then a functional $\EE\colon \LL^\infty(\Om,\FF)\to\R$, which satisfies $\EE(X)\le \EE(Y)$ whenever $X(\omega)\le Y(\omega)$ for all $\omega\in \Omega$, and $\EE(\alpha 1_\Omega)=\alpha$ for all $\alpha\in\mathbb{R}$. If $\EE$ is additionally convex, we say that $\EE$ is a \textit{convex expectation}. It is well known (see e.g. Denk et al.~\cite{MR3824739} or F\"ollmer and Schied \cite{MR2779313}) that every convex expectation $\EE$ admits a dual representation in terms of finitely additive probability measures. If $\EE$, however, even admits a dual representation in terms of (countably additive) probability measures, we say that $(\Om,\FF,\EE)$ is a \textit{convex expectation space}. More precisely, we say that $(\Om,\FF,\EE)$ is a \textit{convex expectation space} if there exists a set $\PP$ of probability measures on $(\Om,\FF)$ and a family $(\al_\P)_{\P\in \PP}\subset [0,\infty)$ with $\inf_{\P\in \PP}\al_\P=0$ such that
\[
 \EE(X)=\sup_{\P\in \PP} \big(\E_\P(X)-\al_\P\big)
\]
for all $X\in \LL^\infty(\Om,\FF)$. Here, $\E_\P$ denotes the expectation w.r.t. a probability measure $\P$ on $(\Om,\FF)$. If $\al_\P=0$ for all $\P\in \PP$, we say that $(\Om,\FF,\EE)$ is a \textit{sublinear expectation space}. Here, the set $\PP$ represents the set of all models that are relevant under the expectation $\EE$. In the case of a sublinear expectation space, the functional $\EE$ is the best case among all plausible models $\PP$. In the case of a convex expectation space, the functional $\EE$ is a weighted best case among all plausible models $\PP$ with an additional penalization term $\al_\P$ for every $\P\in \PP$. Intuitively, $\al_\P$ can be seen as a measure for how much importance we give to the prior $\P\in \PP$ under the expectation $\EE$. For example, a low penalization, i.e.~$\al_\P$ close or equal to $0$, gives more importance to the model $\P\in \PP$ than a high penalization.\\

If a nonlinear expectation $\EE$ is sublinear, then $\rho(X):=\EE(-X)$ defines a coherent monetary risk measure as introduced by Artzner et al.~\cite{MR1850791} and Delbaen \cite{MR2011534},\cite{MR1929369}, see also F\"ollmer and Schied \cite{MR2779313} for an overview of monetary risk measures. Moreover, if $\EE$ is a sublinear expectation, then $\EE$ is a coherent upper prevision, cf. Walley \cite{MR1145491}, and vice versa. We would further like to mention that there is also a one-to-one relation between the following three concepts: convex expectations, convex upper previsions, cf.~Pelessoni and Vicig \cite{MR2027088},\cite{MR2139424}, and convex risk measures, cf.~F\"ollmer and Schied \cite{MR1932379} and Frittelli and Rosazza Gianin \cite{frittelli2002putting}.\\

Throughout, we consider a finite non-empty state space $S$ with cardinality $d:=|S|\in \N$. We endow $S$ with the discrete topology $2^S$ and w.l.o.g. assume that $S=\{1,\ldots, d\}$. The space of all bounded measurable functions $S\to \R$ can therefore be identified by $\R^d$ via
\[
 u=(u_1,\ldots, u_d)^T \quad \text{with }u_i:=u(i)\quad \text{for all }i\in \{1,\ldots, d\}
\]
Therefore, a bounded measurable function $u$ will always be denoted as a vector of the form $u=(u_1,\ldots, u_d)^T\in \R^d$, where $u_i$ represents the value of $u$ in the state  $i\in \{1,\ldots,d\}$. On $\R^d$, we consider the norm
\[
 \|u\|_\infty:=\max_{i=1,\ldots, d}|u_i|
\]
for a vector $u\in \R^d$. Moreover, for $\al\in \R$, the vector $\al\in \R^d$ denotes the constant vector $u\in \R^d$ with $u_i=\al$ for all $i\in \{1,\ldots, d\}$. For a matrix $a=(a_{ij})_{1\leq i,j\leq d}\in \R^{d\times d}$, we denote by $\|a\|$ the operator norm of $a\colon \R^d\to \R^d$ w.r.t. the norm $\|\cdot \|_\infty$, i.e.
\[
 \|a\|=\sup_{v\in \R^d\setminus\{0\}} \frac{\|av\|_\infty}{\|v\|_\infty}=\max_{i=1,\ldots, d}\bigg(\sum_{j=1}^d |a_{ij}|\bigg).
\]
Inequalities of vectors are always understood componentwise, i.e. for $u,v\in \R^d$
\[
 u\leq v \iff \forall i\in \{1,\ldots, d\}\colon u_i\leq v_i.
\]
All concepts in $\R^d$ that include inequalities are to be understood w.r.t. the latter partial ordering. For example, a vector field $F\colon \R^d\to \R^d$ is called convex if
\[
 F_i\big(\la u+(1-\la)v\big)\leq \la F_i(u)+(1-\la)F_i(v)
\]
for all $i\in \{1,\ldots, d\}$, $u,v\in \R^d$ and $\la\in [0,1]$. A vector field $F$ is called sublinear if it is convex and positive homogeneous (of degree $1$). Moreover, for a set $M\subset \R^d$ of vectors, we write $u=\sup M$ if $u_i=\sup_{v\in M} v_i$ for all $i\in \{1,\ldots, d\}$.\\


A matrix $q=(q_{ij})_{1\leq i,j\leq d}\in \R^{d\times d}$ is called a \textit{$Q$-matrix} or \textit{rate matrix} if it satisfies the following conditions:
\begin{enumerate}
	\item[(i)] $q_{ii}\leq 0$ for all $i\in\{1,\ldots, d\}$,
	\item[(ii)] $q_{ij}\geq 0$ for all $i,j\in \{1,\ldots, d\}$ with $i\neq j$,
	\item[(iii)] $\sum_{j=1}^dq_{ij}=0$ for all $i\in \{1,\ldots, d\}$.
\end{enumerate}
It is well known that every continuous-time Markov chain with certain regularity properties at time $t=0$ can be related to a $Q$-matrix and vice versa. More precisely, for a matrix $q\in \R^{d\times d}$ the following statements are equivalent:
\begin{enumerate}
 \item[$(i)$] $q$ is a $Q$-matrix.
 \item[$(ii)$] There is a Markov chain $\big(\Om,\FF,(\P_1,\ldots, \P_d),(X_t)_{t\geq 0}\big)$ such that
  \[
   q u_0=\lim_{h\searrow 0} \frac{\E\big(u_0(X_h)\big)-u_0}{h}\quad \text{for all }u_0\in \R^d,
  \]
   where $u_0(i)$ is the $i$-th component of $u_0$ for $i\in \{1,\ldots, d\}$, $\P_i$ stands for the probability measure under which the Markov chain $(X_t)_{t\geq 0}$ satisfies $\P_i(X_0=i)=1$, for $i\in \{1,\ldots, d\}$, and $$\E(Y):=\big(\E_{\P_1}(Y),\ldots \E_{\P_d}(Y)\big)^T\in \R^d$$ for any bounded random variable $Y\colon \Om\to \R$. 
\end{enumerate}
In this case, for each vector $u_0\in \R^d$, the function $u\colon [0,\infty)\to \R^d,\; t\mapsto \E\big(u_0(X_t)\big)$ is the unique classical solution $u\in C^1\big([0,\infty);\R^d\big)$ to the initial value problem
 \begin{align*}
   u'(t)&=q u(t),\quad t\geq 0,\\
   u(0)&=u_0,
  \end{align*}
i.e. $u(t)=e^{tq}u_0$ for all $t\geq 0$, where $e^{tq}$ is the matrix exponential of $tq$. We refer to Norris \cite{MR1600720} for a detailed illustration of this relation.\\

We say that a (possibly nonlinear) operator $\QQ\colon \R^d\to \R^d$ satisfies the \textit{positive maximum principle} if, for every $u=(u_1,\ldots, u_d)^T\in \R^d$ and $i\in \{1,\ldots, d\}$,
$$(\QQ u)_i\leq 0\quad \text{whenever} \quad u_i\geq u_j\text{ for all }j\in \{1,\ldots,d\}.$$
This notion is motivated by the positive maximum priciple for generators of Feller processes, see e.g.~Jacob \cite[Equation (0.8)]{MR1873235}. Notice that a matrix $q\in \R^{d\times d}$ is a $Q$-matrix if and only if it satisfies the positive maximum principle and $q 1=0$, where $1:=(1,\ldots, 1)^T\in \R^d$ denotes the constant $1$ vector. Indeed, property (iii) in the definition of a $Q$-matrix is just a reformulation of $q1=0$. Moreover, if $q$ satisfies the positive maximum principle, then $q_{ii}=(qe_i)_i\leq 0$ for all $i\in \{1,\ldots, d\}$ and $-q_{ij}=(q(-e_i))_j\leq 0$ for all $i,j\in \{1,\ldots, d\}$ with $i\neq j$. On the other hand, if $q$ is a $Q$-matrix, $u=(u_1,\ldots, u_d)^T\in \R^d$ and $i\in \{1,\ldots, d\}$ with $u_i\geq u_j$ for all $j\in \{1,\ldots,d\}$, then, $(qu)_i=\sum_{j=1}^dq_{ij}u_j\leq u_i\sum_{j=1}^d q_{ij}=0$, which shows that $q$ satisfies the positive maximum principle.\\

In order to state the main result, we need the following definitions.

\begin{definition}\label{defqop}
A (possibly nonlinear) map $\QQ\colon \R^d\to \R^d$ is called a \textit{$Q$-operator} if the following conditions are satisfied:
\begin{enumerate}
 \item[$(i)$] $(\QQ \la e_i)_i\leq 0$ for all $\la>0$ and all $i\in \{1,\ldots, d\}$,
 \item[$(ii)$] $\big(\QQ(-\la e_j)\big)_i\leq 0$ for all $\la>0$ and all $i,j\in \{1,\ldots, d\}$ with $i\neq j$,
 \item[$(iii)$] $\QQ\al=0$ for all $\al\in \R$, where we identify $\al$ with $(\al,\ldots, \al)^T\in \R^d$.
\end{enumerate}
\end{definition}

\begin{definition}\label{markovprocess}
 A \textit{convex Markov chain} is a quadruple $\big(\Om,\FF,\EE,(X_t)_{t\geq 0}\big)$ that satisfies the following conditions:
 \begin{enumerate}
  \item[$(i)$] $(\Om,\FF)$ is a measurable space.
  \item[$(ii)$] $X_t\colon \Om\to \{1,\ldots, d\}$ is $\FF$-measurable for all $t\geq 0$.
  \item[$(iii)$] $\EE=(\EE_1,\ldots, \EE_d)^T$, where $(\Om,\FF,\EE_i)$ is a convex expectation space for all $i\in \{1,\ldots, d\}$ and $\EE\big(u_0(X_0)\big)=u_{0}$. Here and in the following we make use of the notation
  \[
   \EE(Y):=\big(\EE_1(Y),\ldots, \EE_d(Y)\big)^ T\in \R^d
  \]
  for $Y\in \LL^ \infty(\Om,\FF)$. 
  \item[$(iv)$] The following version of the Markov property is satisfied: For all $s,t\geq 0$, $n\in \N$, $0\leq t_1<\ldots <t_n\leq s$ and $v_0\in \big(\R^d\big)^{(n+1)}$,
  \[
   \EE\big(v_0(Y,X_{s+t})\big)=\EE\left[\EE_{X_s,t}\big(v_0(Y,\, \cdot\,)\big)\right],
  \]
  where $Y:=(X_{t_1},\ldots,X_{t_n})$ and $\EE_{i,t} (u_0):=\EE_i\big(u_0(X_t)\big)$ for all $u_0\in \R^d$ and $i\in\{1,\ldots, d\}$.
 \end{enumerate}
 We say that the Markov chain $\big(\Om,\FF,\EE,(X_t)_{t\geq 0}\big)$ is \textit{linear} or \textit{sublinear} if the mapping $\EE\colon \LL^\infty(\Om,\FF)\to \R^d$ is additionally linear or sublinear, respectively.
\end{definition}

The Markov property given in $(iv)$ of the previous definition is the nonlinear analogon of the classical Markov property without using conditional expectations. Notice that, due to the nonlinearity of the expectation, the definition and, in particular, the existence of a conditional (nonlinear) expectation is quite involved, which is why we avoid to introduce this concept.\\

In line with \cite[Definition 5.1]{MR3824739}, we say that a (possibly nonlinear) map $\EE\colon \R^d\to \R^d$ is a \textit{kernel}, if $\EE$ is \textit{monotone}, i.e. $\EE(u)\leq \EE(v)$ for all $u,v\in \R^d$ with $u\leq v$, and $\EE$ \textit{preserves constants}, i.e. $\EE(\al)=\al$ for all $\al\in \R$.

\begin{definition}\label{semigroup}
  A family $\SS=\big(\SS(t)\big)_{t\geq 0}$ of (possibly nonlinear) operators $\SS(t)\colon \R^d\to \R^d$ is called a \textit{semigroup} if
 \begin{enumerate}
  \item[(i)] $\SS(0)=I$, where $I=I_d$ is the $d$-dimensional identity matrix,
  \item[(ii)] $\SS(s+t)=\SS(s)\SS(t)$ for all $s,t\geq 0$.
  \end{enumerate}
  Here and throughout, we make use of the notation $\SS(s)\SS(t):=\SS(s)\circ\SS(t)$. If, additionally, $\SS(h)\to I$ uniformly on compact sets as $h\searrow 0$, we say that the semigroup $\SS$ is \textit{uniformly continuous}.  We call $\SS$ \textit{Markovian} if $\SS(t)$ is a \textit{kernel} for all $t\geq 0$. We say that $\SS$ is \textit{linear}, \textit{sublinear} or \textit{convex} if $\SS(t)$ is linear, sublinear or convex for all $t\geq 0$, respectively.
\end{definition}

\begin{definition}\label{nisiosg}
 Let $\PP\subset \R^{d\times d}$ be a set of $Q$-matrices and $f=(f_q)_{q\in \PP}$ a family of vectors with $\sup_{q\in \PP} f_q=f_{q_0}=0$ for some $q_0\in \PP$. We denote by
 \[
  S_q(t)u_0:=e^{qt}u_0+\int_0^t e^{qs}f_q\, {\rm d}s=u_0+\int_0^t e^{sq}\big(qu_0+f_q\big)\, {\rm d}s
 \]
 for $t\geq 0$, $u_0\in \R^d$ and $q\in \PP$. Then, $S_q=\big(S_q(t)\big)_{t\geq 0}$ is an affine linear semigroup. We call a semigroup $\SS$ the \textit{(upper) semigroup envelope} (later also \textit{Nisio semigroup}) of $(\PP,f)$ if
  \begin{enumerate}
  \item[(i)] $\SS(t)u_0\geq S_q(t)u_0$ for all $t\geq 0$, $u_0\in \R^d$ and $q\in \PP$,
  \item[(ii)] for any other semigroup $\mathscr T$ satisfying (i) we have that $\SS(t)u_0\leq \mathscr T(t)u_0$ for all $t\geq 0$ and $u_0\in \R^d$.
  \end{enumerate}
  That is, the semigroup envelope $\SS$ is the smallest semigroup that dominates all semigroups $(S_q)_{q\in \PP}$.
\end{definition}

The following main theorem gives a full characterization of convex $Q$-operators.

\begin{theorem}\label{main}
 Let $\QQ\colon \R^d\to \R^d$ be a mapping. Then, the following statements are equivalent:
 \begin{enumerate}
  \item[$(i)$] $\QQ$ is a convex $Q$-operator.
  \item[$(ii)$] $\QQ$ is convex, satisfies the positive maximum principle and $\QQ \al=0$ for all $\al\in \R$, where $\al:=(\al,\ldots, \al)^T\in \R^d$.
  \item[$(iii)$] There exists a set $\PP\subset \R^{d\times d}$ of $Q$-matrices and a family $f=(f_q)_{q\in \PP}\subset \R^d$ of vectors with $\sup_{q\in \PP} f_q=f_{q_0}=0$ for some $q_0\in \PP$ such that
  \begin{equation}\label{1}
   \QQ u_0=\sup_{q\in \PP} \big(qu_0+f_q\big)
  \end{equation}
  for all $u_0\in \R^d$, where the suprema are to be understood componentwise.
  \item[$(iv)$] There exists a uniformly continuous convex Markovian semigroup $\SS$ with
    \[
   \QQ u_0=\lim_{h\searrow 0} \frac{\SS(h)u_0-u_0}{h}
  \]
  for all $u_0\in \R^d$.
  \item[$(v)$] There is a convex Markov chain $\big(\Om,\FF,\EE,(X_t)_{t\geq 0}\big)$ such that
  \[
   \QQ u_0=\lim_{h\searrow 0} \frac{\EE\big(u_0(X_h)\big)-u_0}{h}
  \]
  for all $u_0\in \R^d$.
 \end{enumerate}
 In this case, for each initial value $u_0\in \R^d$, the function $u\colon [0,\infty)\to \R^d,\; t\mapsto \EE\big(u_0(X_t)\big)$
 is the unique classical solution $u\in C^1\big([0,\infty);\R^d\big)$ to the initial value problem
 \begin{align}
   u'(t)&=\QQ u(t)=\sup_{q\in \PP} \big(qu(t)+f_q\big),\quad t\geq 0,\label{odemain}\\
   u(0)&=u_0.\notag
  \end{align}
  Moreover, $u(t)=\SS (t)u_0$ for all $t\geq 0$, where $\SS$ is the Markovian semigroup from (iv), and $\SS$ is the semigroup envelope of $(\PP,f)$.
\end{theorem}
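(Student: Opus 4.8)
The plan is to prove $(ii)\Leftrightarrow(i)$ directly, $(ii)\Leftrightarrow(iii)$ by convex duality, and then close the loop $(iii)\Rightarrow(iv)\Rightarrow(v)\Rightarrow(ii)$; the three concluding assertions (the initial value problem \eqref{odemain}, the identity $u(t)=\SS(t)u_0$, and the semigroup-envelope property) are read off from the constructions in $(iii)\Rightarrow(iv)$ and $(iv)\Rightarrow(v)$. The tools needed are an elementary convexity decomposition for $(i)\Leftrightarrow(ii)$, the Fenchel--Legendre transform for $(ii)\Leftrightarrow(iii)$, well-posedness of the nonlinear ODE together with the Nisio/semigroup-envelope construction (cf.~\cite{MR3290231, dkn2, roecknen}) for $(iii)\Rightarrow(iv)$, the convex version of Kolmogorov's extension theorem of Denk et al.~\cite{MR3824739} for $(iv)\Rightarrow(v)$, and a direct limit computation for $(v)\Rightarrow(ii)$.

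For $(ii)\Rightarrow(i)$ I note that conditions $(i)$--$(iii)$ of Definition~\ref{defqop} are special instances of the positive maximum principle, since $\la e_i$ attains its maximum at $i$ and $-\la e_j$ attains its maximum at every $i\neq j$. For the converse $(i)\Rightarrow(ii)$, given $u$ with $u_i\geq u_j$ for all $j$, I would use the convex combination $u=\tfrac1d\,(d\,u_i)+\sum_{j\neq i}\tfrac1d\big(-d(u_i-u_j)e_j\big)$ of the constant vector $d\,u_i$ and the $d-1$ vectors $-d(u_i-u_j)e_j$; applying convexity of $\QQ$ componentwise in the $i$-th coordinate together with conditions $(ii)$ and $(iii)$ of Definition~\ref{defqop} gives $(\QQ u)_i\leq 0$, which is the positive maximum principle. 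For $(ii)\Rightarrow(iii)$ I treat each component $g_i\colon\R^d\to\R$, $g_i(u):=(\QQ u)_i$, which is convex and finite, hence continuous and subdifferentiable, so $g_i=g_i^{**}$. The key step is to identify $\mathrm{dom}(g_i^*)$ with the $i$-th rows of $Q$-matrices: testing $g_i^*$ against constant vectors forces $\sum_j p_j=0$, against $t e_i$ (letting $t\to\infty$) forces $p_i\leq 0$, and against $-t e_j$ for $j\neq i$ forces $p_j\geq 0$, using in each case that the positive maximum principle makes $g_i\leq 0$ on the relevant test vectors. Assembling rows independently yields $\PP$ and $f$ with $(f_q)_i=-g_i^*(\mathrm{row}_i\, q)$, and the normalization $\sup_{q}f_q=f_{q_0}=0$ follows from $g_i(0)=0$ by choosing the rows of $q_0$ in $\partial g_i(0)$.

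Given the representation \eqref{1}, $\QQ$ is convex and locally Lipschitz, so \eqref{odemain} has unique local $C^1$-solutions. The positive maximum principle yields a comparison principle: if $w'\geq\QQ w$, $v'\leq\QQ v$ and $w(0)\geq v(0)$, then $w\geq v$, which one verifies by checking $(\QQ w-\QQ v)_m\leq 0$ at a nonnegative maximal coordinate $m$ of $w-v$ via the row-sum and sign conditions of the $Q$-matrices. Comparison gives the a priori bound $\|u(t)\|_\infty\leq\|u_0\|_\infty$ (the maximal coordinate is non-increasing, the minimal one non-decreasing), hence global existence and uniqueness; I then set $\SS(t)u_0:=u(t;u_0)$. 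Comparison further shows $\SS(t)$ is monotone and preserves constants (constants solve \eqref{odemain}), and is convex (a convex combination of solutions is a supersolution), so $\SS$ is a uniformly continuous convex Markovian semigroup with generator $\QQ$, giving $(iv)$. Domination $\SS(t)u_0\geq S_q(t)u_0$ holds because $S_q(\cdot)u_0$ is a subsolution, and minimality follows from the Nisio approximation $\SS(t)u_0=\lim_{n\to\infty}E_{t/n}^{\,n}u_0$ with $E_h u_0:=\sup_{q\in\PP}S_q(h)u_0$, so $\SS$ is the semigroup envelope. For $(iv)\Rightarrow(v)$ I feed the consistent family $\SS(t)$ into the convex extension theorem \cite{MR3824739} to obtain $(\Om,\FF,\EE,(X_t))$ with $\EE(u_0(X_t))=\SS(t)u_0$, so that $u(t)=\EE(u_0(X_t))=\SS(t)u_0$ solves \eqref{odemain}, with uniqueness from the comparison argument. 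Finally, $(v)\Rightarrow(ii)$ is immediate: monotonicity of $\EE$ applied to $\la 1_{\{X_h=i\}}\leq\la$ and to $-\la 1_{\{X_h=j\}}\leq 0$ yields the positive maximum principle in the limit, constant preservation gives $\QQ\al=0$, and convexity of $\EE$ passes to the limit.

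The main obstacle is the analytic content of $(iii)\Rightarrow(iv)$: showing that the semigroup envelope is a genuine semigroup whose generator is exactly $\QQ$, and establishing its minimality. Existence, the semigroup property, and the generator identification rest on the Nisio-semigroup machinery and on the comparison principle, whose validity hinges precisely on the positive maximum principle; the a priori $\|\cdot\|_\infty$-bound is what upgrades the merely locally Lipschitz $\QQ$ to global well-posedness of \eqref{odemain}. A secondary difficulty is $(iv)\Rightarrow(v)$, which is not elementary and relies on the convex extension theorem of \cite{MR3824739} to realize the semigroup as the marginals of an actual path-space process obeying the nonlinear Markov property of Definition~\ref{markovprocess}.
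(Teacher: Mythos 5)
Your plan is correct in its overall architecture, and where it is complete it takes a genuinely different route from the paper, which proves $(v)\Rightarrow(ii)\Rightarrow(i)\Rightarrow(iii)\Rightarrow(iv)$ (the block labelled $(iv)\Rightarrow(ii)$ in Section \ref{sec2} is in fact argued from the Markov chain) and obtains $(v)$ from the constructed Nisio semigroup via \cite[Theorem 5.6]{MR3824739}. Two of your deviations are improvements in economy. First, your direct proof of $(i)\Rightarrow(ii)$ via the convex combination $u=\tfrac1d(du_i)+\sum_{j\neq i}\tfrac1d\big(-d(u_i-u_j)e_j\big)$ is correct; the paper only recovers this implication by going around the whole equivalence loop. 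Second, in $(iii)\Rightarrow(iv)$ you define $\SS$ as the ODE flow and derive the semigroup law, monotonicity, constant preservation, convexity, uniform continuity and the generator identity from a comparison principle, which is valid as you indicate (at a maximal coordinate $m$ of $w-v$ one has $(\QQ w-\QQ v)_m\le\sup_{q\in\PP}\big(q(w-v)\big)_m\le 0$ by the sign and row-sum conditions), and which handles unbounded $(\PP,f)$ without the truncation the paper needs for well-posedness. Your $(ii)\Rightarrow(iii)$ is the paper's Fenchel--Legendre argument (Remark \ref{conjfunct}) done row by row with the same test vectors, and your $(iv)\Rightarrow(v)$ and $(v)\Rightarrow(ii)$ coincide with the paper's steps; note only that your indicator computation in $(v)\Rightarrow(ii)$ yields the $Q$-operator conditions of Definition \ref{defqop}, i.e.\ statement $(i)$ rather than the full positive maximum principle, which is harmless since you prove $(i)\Rightarrow(ii)$.

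The genuine gap is the envelope claim. In your construction the identity $\SS(t)u_0=\lim_{n\to\infty}\EE_{t/n}^{\,n}u_0$, with $\EE_h u_0:=\sup_{q\in\PP}S_q(h)u_0$, is not a definition nor an off-the-shelf fact but a theorem carrying essentially all the analytic weight of the paper's Section \ref{nisiosec}. The easy half is yours for free: comparison gives $\EE_h\le\SS(h)$ pointwise, and monotonicity of $\EE_h$ then gives $\EE_{t/n}^{\,n}u_0\le\SS(t)u_0$, and likewise $\EE_{t/n}^{\,n}u_0\le\mathscr{T}(t)u_0$ for any semigroup $\mathscr{T}$ dominating all $S_q$. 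But minimality requires the hard half, $\lim_{n\to\infty}\EE_{t/n}^{\,n}u_0\ge\SS(t)u_0$, i.e.\ that the iterated one-step suprema do not undershoot the ODE flow. This needs a consistency estimate of the form $\EE_h u_0\ge u_0+h\,\QQ u_0-o(h)$ with the error uniform as $q$ ranges over $\PP$; this holds when $\PP$ is bounded but is exactly what fails to be uniform when $f$ (hence $\PP$) is unbounded, and the paper's Remark \ref{uniclass1}, Lemma \ref{intsemi}, Proposition \ref{diffzero} and the Gronwall argument of Proposition \ref{endmain} exist precisely to close this. Citing ``Nisio machinery'' does not discharge the step, because in \cite{MR3290231}, \cite{dkn2}, \cite{roecknen}, and in this paper, the Nisio object is defined as a supremum over partitions, so identifying it with your ODE flow is again the same work. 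As it stands, your argument establishes the equivalence of $(i)$--$(v)$ and the ODE characterization, but not the final assertion that $\SS$ is the semigroup envelope of $(\PP,f)$. Relatedly, your closing sentence misplaces the burden: in your setup, existence, the semigroup property and the generator identification come free from ODE uniqueness, and it is only the minimality statement that needs the missing approximation argument.
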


\begin{remark}
Consider the situation of Theorem \ref{main}.
\begin{enumerate}
 \item[a)] The dual representation in $(iii)$ gives a model uncertainty interpretation to $Q$-operators. The set $\PP$ can be seen as the set of all plausible rate matrices, when considering the $Q$-operator $\QQ$. For every $q\in \PP$, the vector $f_q\leq 0$ can be interpreted as a penalization, which measures how much importance we give to each rate matrix $q$. The requirement that there exists some $q_0\in \PP$ with $f_{q_0}=0$ can be interpreted in the following way: There has to exist at least one rate matrix $q_0$ within the set of all plausible rate matrices $\PP$ to which we assign the maximal importance, that is the minimal penalization. 
 \item[b)] The semigroup envelope $\SS$ of $(\PP,f)$ can be constructed more explicitly, in particular, an explicit (in terms of $(\PP,f)$) dual representation can be derived. For details, we refer to Section \ref{nisiosec} (Definition \ref{nisiosg1} and Remark \ref{rem:dualrep}). Moreover, we would like to highlight that the semigroup envelope $\SS$ can be constructed w.r.t.~any dual representation $(\PP,f)$ as in $(iii)$ and results in the unique classical solution to \eqref{odemain} independent of the choice of the dual representation $(\PP,f)$ of $\QQ$. This gives, in some cases, the opportunity to efficiently compute the semigroup envelope numerically via its primal/dual representation (see Example \ref{ex:numdualrep}). 
 \item[c)] The same equivalence as in Theorem \ref{main} holds if convexity is replaced by sublinearity in $(i)$, $(ii)$, $(iv)$ and $(v)$ and $f_q=0$ for all $q\in \PP$ in $(iii)$. In this case, the set $\PP$ in $(iii)$ can be chosen to be compact as we will see in the proof of Theorem \ref{main}.
 \item[d)] Theorem \ref{main} extends and includes the well-known relation between (linear) Markov chains, $Q$-matrices and ordinary differential equations.
  \item[e)] A remarkable consequence of Theorem \ref{main} is that every convex Markovian semigroup, which is differentiable at time $t=0$, is the semigroup envelope with respect to the Fenchel-Legendre transformation (or any other dual representation as in $(iii)$ of its generator, which is a convex $Q$-operator.
  \item[f)] Although $\QQ$ has an unbounded convex conjugate, the convex initial value problem
  \begin{equation}\label{eq2}
   u'(t)=\QQ u(t)\; \;\text{for all}\; t\geq 0,\quad u(0)=u_0.
  \end{equation}
  has a unique global solution.
  
  \item[g)] Solutions to \eqref{eq2} remain bounded. Therefore, a Picard iteration or Runge-Kutta methods, such as the explicit Euler method, can be used for numerical computations, and the convergence rate (depending on the size of the initial value $u_0$) can be derived from the a priori estimate from Banach's fixed point theorem.
  
  \item[h)] As in the linear case, by solving the differential equation \eqref{eq2}, one can (numerically) compute expressions of the form
  \[
   u(t)=\EE(u_0(X_t)).
  \]
  under model uncertainty. We illustrate this computation in Example \ref{ex:numode}. 
\end{enumerate} 
\end{remark}

{\bf Structure of the paper.}
In Section \ref{sec2}, we give a proof of the implications $(iv)\Rightarrow (ii)\Rightarrow (i)\Rightarrow (iii)$ of Theorem \ref{main}. The main tool, we use in this part, is convex duality in $\R^d$. In Section \ref{nisiosec}, we prove the implication $(iii)\Rightarrow (iv)$. Here, we use a combination of Nisio semigroups as introduced in \cite{MR3290231}, a Kolmogorov-type extension theorem for convex expectations derived in \cite{MR3824739} and the theory of ordinary differential equations. In Section \ref{sec:numex}, we use two different numerical methods, based on the results from Section \ref{nisiosec}, in order to compute price bounds for European contingent claims, where the underlying is a discrete version of a Brownian motion with drift undertainty ($g$-framework) and volatility uncertainty ($G$-framework).


\section{Proof of $(iv)\Rightarrow (ii)\Rightarrow (i)\Rightarrow (iii)$}\label{sec2}

We say that a set $\PP\subset \R^{d\times d}$ of matrices is \textit{row-convex} if, for any diagonal matrix $\la\in \R^{d\times d}$ with $\la_i:=\la_{ii}\in [0,1]$ for all $i\in \{1,\ldots, d\}$,
\[
 \la p+(I-\la)q\in \PP\quad \text{for all }p,q\in \PP,
\]
where $I=I_d\in \R^{d\times d}$ is the $d$-dimensional identity matrix. Notice that, for all $i\in \{1,\ldots, d\}$, the $i$-th row of the matrix $\la p+(I-\la)q$ is the convex combination of the $i$-th row of $p$ and $q$ with $\la_i$. Therefore, a set $\PP$ is row-convex if for all $p,q\in \PP$ the convex combination with different $\la\in [0,1]$ in every row is again an element of $\PP$. For example, the set of all $Q$-matrices is row-convex.

\begin{remark}\label{conjfunct}
Let $\QQ$ be a convex $Q$-operator. For every matrix $q\in \R^{d\times d}$, let
  \[
   \QQ^*(q):=\sup_{u\in \R^d} \big(qu -\QQ(u)\big)\in [0,\infty]^d
  \]
   be the \textit{conjugate function} of $\QQ$. Notice that $0\leq \QQ^*(q)$ for all $q\in \R^{d\times d}$, since $\QQ(0)=0$. Let
   \[
    \PP:=\{q\in \R^{d\times d}\, |\, \QQ^*(q)<\infty\}
   \]
   and $f_q:=-\QQ^*(q)$ for all $q\in \PP$. Then, the following facts are well-known results from convex duality theory in $\R^d$.
\begin{enumerate}
\item[a)] The set $\PP$ is row-convex and the mapping $\PP\to \R^d,\; q\mapsto f_q$ is continuous.
  \item[b)] Let $M>0$ and $\PP_M:=\{q\in \R^{d\times d}\, |\, \QQ^*(q)\leq M\}$. Then, $\PP_M\subset \R^{d\times d}$ is compact and row-convex. Therefore,

 \begin{equation}\label{qmaxop}
  \QQ_M\colon \R^d\to \R^d, \quad u\mapsto \max_{q\in \PP_M}\big( qu+f_q\big)
 \end{equation}
  defines a convex operator, which is Lipschitz continuous. Notice that the maximum in \eqref{qmaxop} is to be understood componentwise. However, for fixed $u_0\in \R^d$, the maximum can be attained, simultaneously in every component, by a single element of $\PP_M$, since $\PP_M$ is row-convex, i.e.,~for all $u_0\in \R^d$, there exists some $q_0\in \PP_M$ with
  \[
   \QQ_M=q_0 u_0+f_{q_0}.
  \]
  \item[c)] Let $R>0$. Then, there exists some $M>0$, such that
 \[
  \QQ u_0=\max_{q\in \PP_M} \big(qu_0+f_q\big)=\QQ_M u_0
 \]
 for all $u_0\in \R^d$ with $\|u_0\|_\infty\leq R$. In particular, $\QQ$ is locally Lipschitz continuous and admits a representation of the form
 \[
  \QQ u_0=\max_{q\in \PP} \big(qu_0+f_q\big)
 \]
 for all $u_0\in \R^d$, where, for fixed $u_0\in \R^d$, the maximum can be attained,  simultaneously in every component, by a single element of $\PP$. In particular, there exists some $q_0\in \PP$ with $f_{q_0}=\sup_{q\in \PP}f_q=\QQ(0)=0$.
 \end{enumerate}
\end{remark}

\begin{proof}[Proof of Theorem \ref{main}]\ \\
 $(iv)\Rightarrow (ii)$: As $\EE_i$ is a convex expectation for all $i\in \{1,\ldots, d\}$, it follows that the operator $\QQ$ is convex with $\QQ \al=0$ for all $\al\in \R$. Now, let $u_0\in \R^d$ and $i\in \{1,\ldots, d\}$ with $u_{0,i}\geq u_{0,j}$ for all $j\in \{1,\ldots, d\}$. Let $\al >0$ be such that
 \[
  \|u_0+\al\|_\infty=\big(u_0+\al\big)_i=u_{0,i}+\al
 \]
 and $v_0:=u_0+\al$. Then,
 \[
  \QQ v_0=\lim_{h\searrow 0}\frac{\EE\big(u_0(X_h)+\al\big)-v_0}{h}=\lim_{h\searrow 0}\frac{\EE\big(u_0(X_h)\big)-u_0}{h}=\QQ u_0.
 \]
 Assume that $\big(\QQ u_0\big)_i>0$. Then, there exists some $h>0$ such that
 \[
  \EE_i\big(v_0(X_h)\big)-v_{0,i}>0,
 \]
 i.e.
 \[
  \big\|\EE\big(v_0(X_h)\big)\big\|_{\infty}\geq \EE_i\big(v_0(X_h)\big)>v_{0,i}=\|v_0\|_\infty,
 \]
 which is a contradiction to
 \[
  \big\|\EE\big(v_0(X_h)\big)\big\|_{\infty}\leq \|v_0\|_\infty.
 \]
 This shows that $\QQ$ satisfies the positive maximum principle.\\
$(ii) \Rightarrow (i)$: This follows directly from the positive maximum principle, considering the vectors $\la e_i$ and $-\la e_i$ for all $\la>0$ and $i\in \{1,\ldots, d\}$.\\
  $(i)\Rightarrow (iii)$: Let $\QQ$ be a convex $Q$-operator. Moreover, let $\PP$ and $f=(f_q)_{q\in \PP}$ be as in Remark \ref{qmaxop}. 
 Then, by Remark \ref{qmaxop} c), it only remains to show that every $q\in \PP$ is a $Q$-matrix. To this end, fix an arbitrary $q\in \PP$. Then, for all $\al\in \R$,
 \[
  q\al=\frac{1}{\la} q(\la\al)\leq \frac{1}{\la}\big(\QQ(\la\al)+\QQ^*(q)\big)=\frac{1}{\la}\QQ^*(q)\to 0\quad \text{as }\la\to \infty.
 \]
 Therefore, $q\al\leq 0$ for all $\al\in \R$. Since, $q$ is linear, it follows $q1=0$. Now, let $i\in \{1,\ldots, d\}$. Then, by definition of a $Q$-operator, we obtain that
\[
 q_{ii}\leq \frac{1}{\la} \big(\QQ(\la e_i)+\QQ^*(q)\big)_i\leq \frac{1}{\la}\big(\QQ^*(q)\big)_i\to 0 \quad \text{as }\la\to \infty,
\]
 i.e. $q_{ii}\leq 0$. Now, let $i,j\in \{1,\ldots, d\}$ with $i\neq j$. Then, again by definition of a $Q$-operator, it follows that 
\[
 -q_{ij}\leq \frac{1}{\la} \big(\QQ(-\la e_i)+\QQ^*(q)\big)_j\leq \frac{1}{\la}\big(\QQ^*(q)\big)_j\to 0\quad \text{as }\la\to \infty,
\]
 i.e. $q_{ij}\geq 0$. Therefore, $q$ is a $Q$-matrix.\\
It remains to show $(iii)\Rightarrow(iv)$, which is done in the entire next section.
\end{proof}


 
 \section{Proof of $(iii)\Rightarrow (iv)$}\label{nisiosec}

Throughout, let $\PP\subset \R^{d\times d}$ be a set of $Q$-matrices and $f=(f_q)_{q\in \PP}\subset \R^d$ with $\sup_{q\in \PP}f_q=f_{q_0}=0$, for some $q_0\in \PP$, such that
\[
 \QQ\colon \R^d\to \R^d,\quad u\mapsto \sup_{q\in \PP} \big(q u+f_q\big)
\]
is well-defined.
For every $q\in \PP$, we consider the linear ODE
\begin{equation}\label{linode}
 u'(t)= q u(t)+f_q,\quad \text{for }t\geq 0,
\end{equation}
 with $u(0)=u_0\in \R^d$. Then, by a variation of constant, the solution to \eqref{linode} is given by 
 \begin{equation}\label{duhamel}
  u(t)=e^{qt}u_0+\int_0^t e^{qs}f_q\, {\rm d}s=u_0+\int_0^t e^{sq}\big(qu_0+f_q\big)\, {\rm d}s=:S_q(t)u_0
 \end{equation}
 for $t\geq 0$, where $e^{tq}\in \R^{d\times d}$ is the matrix exponential of $tq$ for all $t\geq 0$. Then, the family $S_q=\big(S_q(t)\big)_{t\geq 0}$ defines a uniformly continuous semigroup of affine linear operators (see Definition \ref{semigroup}).
  
  \begin{remark}
  Note that, for all $q\in \PP$ and $t\geq 0$, the matrix exponential $e^{tq}\in \R^{d\times d}$ is a \textit{stochastic matrix}, i.e.
\begin{enumerate}
	\item[(i)] $\big(e^{tq}\big)_{ij}\geq 0$ for all $i,j\in \{1,\ldots, d\}$,
	\item[(ii)] $e^{tq}1=1$.
\end{enumerate}
Therefore, $e^{tq}\in \R^{d\times d}$ is a linear kernel, i.e. $e^{tq}u_0\leq e^{tq}v_0$ for all $u_0,v_0\in \R^d$ with $u_0\leq v_0$ and $e^{tq}\al=\al$ for all $\al\in \R$, which implies that $S_q(t)$ is monotone for all $q\in \PP$ and $t\geq 0$.
  \end{remark}
  
For the family $(S_q)_{q\in \PP}$ or, more precisely, for $(\PP,f)$, we will now construct the \textit{Nisio semigroup}, and show that it gives rise to the unique classical solution to the nonlinear ODE \eqref{odemain}. To this end, we consider the set of finite partitions
\[
P:=\big\{\pi\subset [0,\infty)\, \big|\, 0\in \pi, |\pi|<\infty\big\}. 
\]
The set of partitions with end-point $t\geq 0$ will be denoted by $P_t$, i.e. $P_t := \{\pi \in P\, |\, \max \pi = t\}$. Notice that
\[
 P=\bigcup_{t\geq 0} P_t.
\]
For all $h\geq 0$ and $u_0\in \R^d$, we define
\[
 \EE_h u_0:=\sup_{q\in \PP} S_q(h) u_0,
\]
where the supremum is taken componentwise. Note that $\EE_h$ is well-defined since
\[
 S_q(h)u_0=e^{hq}u_0+\int_0^h e^{sq}f_q\, {\rm d}s\leq e^{hq}u_0\leq \|u_0\|_\infty
\]
for all $q\in \PP$, $h\geq 0$ and $u_0\in \R^d$, where we used the fact that $e^{hq}$ is a kernel. Moreover, $\EE_h$ is a convex kernel, for all $h\geq 0$, as it is monotone and
\[
 \EE_h \al=\al+\sup_{q\in \PP}\int_0^h e^{sq}f_q\, {\rm d}s=\al
\]
for all $\al\in \R$, where we used the fact that there is some $q_0\in \PP$ with $f_{q_0}=0$. For a partition $\pi=\{t_0,t_1,\ldots,t_m\}\in P$ with $m\in \N$ and $0=t_0<t_1<\ldots<t_m$, we set
\[
 \EE_\pi:=\EE_{t_1-t_0}\ldots \EE_{t_m-t_{m-1}}.
\]
Moreover, we set $\EE_{\{0\}}:=\EE_0$. Then, $\EE_\pi$ is a convex kernel for all $\pi\in P$ since it is a concatenation of convex kernels.

\begin{definition}\label{nisiosg1}
 The \textit{Nisio semigroup} $\SS=\big(\SS(t)\big)_{t\geq 0}$ of $(\PP,f)$ is defined by
\[
 \SS(t)u_0:=\sup_{\pi\in P_t} \EE_\pi u_0
\]
for all $u_0\in \R^d$ and $t\geq 0$. 
\end{definition}

Notice that $\SS(t)\colon \R^d\to \R^d$ is well-defined and a convex kernel for all $t\geq 0$ since $\EE_\pi$ is a convex kernel for all $\pi\in P$. In many of the subsequent proofs, we will first concentrate on the case, where the family $f$ is bounded and then use an approximation of the Nisio semigroup by means of other Nisio semigroups. This approximation procedure is specified in the following remark.

\begin{remark}\label{uniclass1}
 Let $M\geq 0$, $\PP_M:=\big\{q\in \PP\, \big|\, \|f_q\|_\infty\leq M\big\}$ and $f_M:=(f_q)_{q\in \PP_M}$. Notice that, by assumption, there exists some $q_0\in \PP$ with $f_{q_0}=0$, which implies that $q_0\in \PP_M$. Since $\PP_M\subset \PP$ (and by definition of $f_M$), the operator
 \[
  \QQ_M\colon \R^d\to \R^d,\quad v\mapsto \sup_{q\in \PP_M} \big(qv +f_q\big)
 \]
 is well-defined. Let $\SS_M$ be the Nisio semigroup w.r.t. $(\PP_M,f_M)$ for all $M\geq 0$. Since
\[
 \bigcup_{M\geq 0}\PP_M=\PP,
\]
it follows that $\QQ_M\nearrow \QQ$ and $\SS_M(t)\nearrow \SS(t)$, for all $t\geq 0$, as $M\to \infty$.
Moreover, for all $q\in \PP_M$ and $u_0\in \R^d$ with $\|u_0\|_\infty=1$,
 \[
 q u_0\leq \QQ u_0-f_q\leq \|\QQ u_0\|_\infty+\|f_q\|_\infty\leq M+\max_{v\in \mathbb{S}^{d-1}}\|\QQ v\|_\infty,
 \]
 where $\mathbb{S}^{d-1}:=\{v\in \R^d\, |\, \|v\|_\infty=1\}$ and, in the last step, we used the fact that $\QQ\colon \R^d\to \R^d$ is convex and therefore continuous. This implies that the set $\PP_M$ is bounded in the sense that $\sup_{q\in \PP_M}\|q\|<\infty$. In particular,
 \begin{equation}\label{bla1}
  \sup_{q\in \PP_M} \|q u_0+f_q\|_\infty\leq \sup_{q\in \PP_M}\big(\|q\|\|u_0\|_\infty+\|f_q\|_\infty\big)\leq M+ \sup_{q\in \PP_M}\|q\|\|u_0\|_\infty<\infty
 \end{equation}
  for all $u_0\in \R^d$.
\end{remark}

\begin{lemma}\label{Ehcont}
 Assume that the family $f$ is bounded, i.e. $(\PP,f)=(\PP_M,f_M)$ for some $M\geq 0$. Then, for all $u_0\in \R^d$, the mapping $[0,\infty)\to \R^d,\; h\mapsto \EE_h u_0$ is Lipschitz continuous.
\end{lemma}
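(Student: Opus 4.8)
The plan is to reduce the claimed Lipschitz estimate for $h\mapsto \EE_h u_0$ to a \emph{uniform-in-$q$} Lipschitz estimate for the affine linear semigroups $S_q$, exploiting that a componentwise supremum is nonexpansive. Concretely, for each component $i$ one has the elementary bound
\[
 \big|(\EE_h u_0)_i-(\EE_k u_0)_i\big|=\Big|\sup_{q\in \PP}(S_q(h)u_0)_i-\sup_{q\in \PP}(S_q(k)u_0)_i\Big|\leq \sup_{q\in \PP}\big|(S_q(h)u_0)_i-(S_q(k)u_0)_i\big|,
\]
so that $\|\EE_h u_0-\EE_k u_0\|_\infty\leq \sup_{q\in \PP}\|S_q(h)u_0-S_q(k)u_0\|_\infty$. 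Before using this I would record that all the suprema involved are finite: by $S_q(h)u_0\leq \|u_0\|_\infty$ and, since $\|f_q\|_\infty\leq M$ and $e^{sq}$ is a kernel, $\|S_q(h)u_0-u_0\|_\infty\leq Mh$ uniformly in $q$, so $\EE_h u_0$ is well defined and the nonexpansiveness step is legitimate.

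Next I would estimate the inner difference via the Duhamel representation \eqref{duhamel}. For $h\geq k\geq 0$ and any $q\in \PP$,
\[
 S_q(h)u_0-S_q(k)u_0=\int_k^h e^{sq}\big(qu_0+f_q\big)\, {\rm d}s,
\]
whence $\|S_q(h)u_0-S_q(k)u_0\|_\infty\leq \int_k^h \|e^{sq}\|\,\|qu_0+f_q\|_\infty\, {\rm d}s$. The crucial input here is that each $e^{sq}$ is a stochastic matrix, hence a kernel, so its operator norm with respect to $\|\cdot\|_\infty$ equals $1$ (the rows are nonnegative and sum to $1$). This collapses the integral to $\|S_q(h)u_0-S_q(k)u_0\|_\infty\leq |h-k|\,\|qu_0+f_q\|_\infty$.

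Finally I would take the supremum over $q\in \PP=\PP_M$ and invoke the uniform bound \eqref{bla1} from Remark \ref{uniclass1}, which is available precisely because the family $f$ is assumed bounded: $\sup_{q\in \PP_M}\|qu_0+f_q\|_\infty\leq M+\sup_{q\in \PP_M}\|q\|\,\|u_0\|_\infty=:L<\infty$. Combining the three steps gives $\|\EE_h u_0-\EE_k u_0\|_\infty\leq L\,|h-k|$, i.e.\ Lipschitz continuity with a constant $L=L(u_0)$ depending on $u_0$. I do not expect a genuine obstacle, as all the matrix-analytic facts are already in place; the only point requiring care is ensuring finiteness of the suprema and the nonexpansiveness of the componentwise supremum, and this is exactly where the boundedness hypothesis $(\PP,f)=(\PP_M,f_M)$ enters in an essential way, since without it $\sup_q\|q\|$ could be infinite and the Lipschitz constant would degenerate.
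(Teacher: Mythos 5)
Your proof is correct and follows essentially the same route as the paper: the nonexpansiveness of the componentwise supremum reduces the estimate to a uniform-in-$q$ bound on $\|S_q(h)u_0-S_q(k)u_0\|_\infty$, which is then obtained from the Duhamel formula \eqref{duhamel}, the fact that $e^{sq}$ is a stochastic matrix (operator norm $1$), and the uniform bound \eqref{bla1}. The only difference is presentational — you spell out the finiteness and nonexpansiveness steps more explicitly than the paper does.
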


\begin{proof}
 Let $u_0\in \R^d$ and $0\leq h_1<h_2$. Then, by \eqref{duhamel}, for all $q\in \PP$ we have that
 \begin{align*}
  \|S_q(h_2)u_0 -S_q(h_1)u_0\|_\infty\leq \int_{h_1}^{h_2}\big\|e^{qs}(qu_0+f_q)\big\|_\infty\, {\rm d}s\leq (h_2-h_1)\|qu_0+f_q\|_\infty,
 \end{align*}
 which implies that
 \begin{equation}\label{bla2}
  \|\EE_{h_2}u_0-\EE_{h_1}u_0\|_\infty\leq \sup_{q\in \PP} \|S_q(h_2)u_0 -S_q(h_1)u_0\|_\infty \leq (h_2-h_1)\bigg(\sup_{q\in \PP}\|qu_0+f_q\|_\infty\bigg).
 \end{equation}
 Note that $\sup_{q\in \PP}\|qu_0+f_q\|_\infty<\infty$ by \eqref{bla1}.
\end{proof}

\begin{lemma}\label{unicont}
Assume that the family $f$ is bounded. Then,
 \[
  \|\SS(t)u_0-u_0\|_\infty\leq t\bigg(\sup_{q\in \PP}\|q u_0+f_q\|_\infty\bigg)
 \]
for all $t\geq 0$ and $u_0\in \R^d$. In particular, the map $[0,\infty)\to \R^d,\; t\mapsto \SS(t)u_0$ is Lipschitz continuous for all $u_0\in \R^d$
\end{lemma}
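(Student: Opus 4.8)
The plan is to reduce the estimate for $\SS(t)$ to a uniform estimate for each $\EE_\pi$, $\pi\in P_t$, and to obtain the latter by an induction over the number of partition points that compares $\EE_\pi u_0$ only with the constant shifts $u_0\pm s\,c_0$ of $u_0$, where $c_0:=\sup_{q\in\PP}\|qu_0+f_q\|_\infty<\infty$ by \eqref{bla1}. Two ingredients enter. First, taking $h_1=0$ in \eqref{bla2} (and noting $\EE_0u_0=u_0$) yields the base estimate $\|\EE_hu_0-u_0\|_\infty\le h\,c_0$, i.e.\ $u_0-h\,c_0\le \EE_hu_0\le u_0+h\,c_0$ componentwise for every $h\ge 0$. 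Second, I would record that each $\EE_h$ is translation equivariant: since $S_q(h)(v+\al)=e^{hq}(v+\al)+\int_0^he^{sq}f_q\,{\rm d}s=S_q(h)v+\al$ for all $\al\in\R$ (using $e^{hq}1=1$), taking the supremum over $q\in\PP$ gives $\EE_h(v+\al)=\EE_hv+\al$; consequently $\EE_\pi$ and $\SS(t)$ are translation equivariant as well.

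Now fix $t\ge 0$ and $\pi=\{0=t_0<\dots<t_m=t\}\in P_t$, and set $h_1:=t_1$. Peeling off the first factor, $\EE_\pi u_0=\EE_{h_1}(\EE_{\pi'}u_0)$, where $\pi'$ is the partition with interval lengths $t_2-t_1,\dots,t_m-t_{m-1}$ and endpoint $t-h_1$. By the induction hypothesis $u_0-(t-h_1)c_0\le \EE_{\pi'}u_0\le u_0+(t-h_1)c_0$; applying the monotone, translation-equivariant operator $\EE_{h_1}$ and the base estimate gives
\[
\EE_\pi u_0\le \EE_{h_1}\big(u_0+(t-h_1)c_0\big)=\EE_{h_1}u_0+(t-h_1)c_0\le u_0+t\,c_0,
\]
and symmetrically $\EE_\pi u_0\ge u_0-t\,c_0$. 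Taking the supremum over $\pi\in P_t$ in Definition \ref{nisiosg1} then yields $u_0-t\,c_0\le \SS(t)u_0\le u_0+t\,c_0$, which is exactly $\|\SS(t)u_0-u_0\|_\infty\le t\,c_0$.

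For Lipschitz continuity in $t$, fix $0\le t_1<t_2$. For the lower bound I would use that the partitions of $[0,t_2]$ obtained by appending the single interval $[t_1,t_2]$ to a partition of $[0,t_1]$ give $\SS(t_2)u_0\ge \SS(t_1)\big(\EE_{t_2-t_1}u_0\big)\ge \SS(t_1)u_0-(t_2-t_1)c_0$, using monotonicity and translation equivariance of $\SS(t_1)$ together with the base estimate. For the upper bound I would refine any $\sigma\in P_{t_2}$ to $\sigma\cup\{t_1\}$; the elementary inequality $\EE_{s+t}u_0\le \EE_s(\EE_tu_0)$ (which follows from $S_q(s+t)=S_q(s)S_q(t)$, monotonicity of $S_q(s)$ and $S_q(t)u_0\le \EE_tu_0$) shows that refining only increases $\EE_\sigma u_0$, and since $\sigma\cup\{t_1\}$ factors through $t_1$ one obtains $\EE_\sigma u_0\le \SS(t_1)u_0+(t_2-t_1)c_0$ from the estimate just proved; taking the supremum over $\sigma$ gives $\SS(t_2)u_0\le \SS(t_1)u_0+(t_2-t_1)c_0$. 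Together these bound $\|\SS(t_2)u_0-\SS(t_1)u_0\|_\infty\le (t_2-t_1)c_0$.

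The main obstacle is precisely obtaining the \emph{linear}-in-$t$ constant $c_0$ through the long composition $\EE_\pi=\EE_{h_1}\cdots\EE_{h_m}$: a naive telescoping of the per-step estimate $\|\EE_{h_k}w-w\|_\infty\le h_k\sup_q\|qw+f_q\|_\infty$ forces the base point to drift, and since $\sup_{q\in\PP}\|q\|<\infty$ (Remark \ref{uniclass1}) the resulting discrete Gronwall bound degrades to an exponential factor $e^{t\sup_q\|q\|}$. The device that avoids this is to compare $\EE_\pi u_0$ only against the shifts $u_0\pm s\,c_0$ and to exploit that $\EE_h$ is simultaneously monotone and translation equivariant, so that the base point $u_0$ is never moved and the estimate accumulates additively rather than multiplicatively.
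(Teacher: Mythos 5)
Your proof is correct, and its core estimate is obtained by the same mechanism as in the paper: both arguments bound $\EE_\pi u_0-u_0$ additively over the steps of a partition using the one-step bound \eqref{bla2}, and then take the supremum over $\pi\in P_t$. The paper phrases this as a telescoping sum, $\|\EE_\pi u_0-u_0\|_\infty\le\sum_{k}\|\EE_{h_k}u_0-u_0\|_\infty$, justified only by the remark that $\EE_h$ is a kernel; your induction against the shifts $u_0\pm s\,c_0$ is the same device, but you make explicit what that justification actually requires: monotonicity plus constant preservation (the paper's definition of a kernel) does \emph{not} by itself give non-expansiveness in $\|\cdot\|_\infty$ --- one needs the translation equivariance $\EE_h(v+\al)=\EE_h v+\al$, which you correctly derive from $e^{hq}1=1$ and which is exactly what keeps the base point from drifting and the constant linear in $t$. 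Where you genuinely diverge is the Lipschitz-in-time statement: the paper deduces $\|\SS(t)u_0-\SS(s)u_0\|_\infty\le\|\SS(|t-s|)u_0-u_0\|_\infty$ from non-expansiveness of $\SS(s)$ together with the semigroup law $\SS(t)=\SS(s)\SS(t-s)$, which is established only later (Proposition \ref{semchains}), whereas your two-sided partition argument (appending the interval $[t_1,t_2]$ for the lower bound; refining an arbitrary $\sigma\in P_{t_2}$ by $t_1$ and factoring for the upper bound) uses only \eqref{monotone} and the estimate already proved. The paper's route is shorter; yours costs a few extra lines but removes the forward dependence on the semigroup property and the implicit appeal to translation equivariance, making the lemma self-contained at this point of the development.
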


\begin{proof}
 Let $u_0\in \R^d$. 
Then, for any partition $\pi\in P$ of the form $\pi=\{t_0,t_1,\dots,t_m\}$ with $m\in \N$ and $0=t_0<t_1<\ldots <t_m$, \eqref{bla2} together with the fact that $\EE_h$ is a kernel, for all $h\geq 0$, implies that
\begin{align*}
 \|\EE_\pi u_0-u_0\|_\infty&\leq \sum_{k=1}^m \|\EE_{h_k}u_0-u_0\|_\infty\leq\sum_{k=1}^mh_k\bigg(\sup_{q\in \PP}\|q u_0+f_q\|_\infty\bigg)\\
 &= t_m\bigg(\sup_{q\in \PP}\|q u_0+f_q\|_\infty\bigg),
\end{align*}
where $h_k:=t_k-t_{k-1}$ for all $k\in \{1,\ldots, m\}$. By definition of $\SS(t)$, for $t\geq 0$, it follows that
\[
 \|\SS(t)u_0-u_0\|_\infty\leq \sup_{\pi\in P_t} \|\EE_\pi u_0-u_0\|_\infty\leq t\bigg(\sup_{q\in \PP}\|q u_0+f_q\|_\infty\bigg).
\]
Now, let $s,t\geq 0$. Then, since $\SS(h)$ is a kernel for all $h\geq0$, it follows that
\[
 \|\SS(t)u_0-\SS(s)u_0\|_\infty\leq \|\SS(|t-s|)u_0-u_0\|_\infty\leq |t-s|\bigg(\sup_{q\in \PP}\|q u_0+f_q\|_\infty\bigg).
\]

\end{proof}

For a partition $\pi=\{t_0,t_1,\dots,t_m\}\in P$ with $m\in \N$ and $0=t_0<t_1<\ldots <t_m$, we define the \textit{(maximal) mesh size} of $\pi$ by 
\[
 |\pi|_\infty := \max_{j=1,\dots,m} (t_j-t_{j-1}).
\]
Moreover, we set $|\{0\}|_\infty:=0$. Let $u_0\in \R^d$. In the following, we consider the limit of $\EE_\pi u_0$ when the mesh size of the partition $\pi\in P$ tends to zero. For this, we first remark that, for $h_1,h_2\geq 0$,
\begin{align*}
 \EE_{h_1+h_2}u_0&=\sup_{q\in \PP} S_\la(h_1+h_2)u_0=\sup_{q\in \PP} S_\la(h_1)S_\la(h_2)u_0\\
 &\leq \sup_{q\in \PP} S_\la(h_1)\EE_{h_2}u_0=\EE_{h_1}\EE_{h_2}u_0,
\end{align*}
which implies the inequality
\begin{equation}\label{monotone}
 \EE_{\pi_1}u_0\leq \EE_{\pi_2}u_0
\end{equation}
for $\pi_1,\pi_2\in P$ with $\pi_1\subset \pi_2$. 
The following lemma now states that $\SS(t)$, for $t\geq 0$, can be obtained by a pointwise monotone approximation with finite partitions letting the mesh size tend to zero.

\begin{lemma}\label{monlimit}
 Let $t\geq 0$ and $(\pi_n)_{n\in \N}\subset P_t$ with $\pi_n\subset \pi_{n+1}$ for all $n\in \N$ and $|\pi_n|_\infty\searrow 0$ as $n\to \infty$. Then, for all $u_0\in \R^d$,
 \[
  \EE_{\pi_n}u_0\nearrow \SS(t)u_0, \quad n\to \infty.
 \]
\end{lemma}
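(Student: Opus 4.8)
The plan is to establish the two componentwise inequalities $\lim_n \EE_{\pi_n}u_0 \le \SS(t)u_0$ and $\SS(t)u_0 \le \lim_n \EE_{\pi_n}u_0$ separately. The first, together with the monotonicity of the convergence, is immediate: since $\pi_n \subset \pi_{n+1}$, inequality \eqref{monotone} gives $\EE_{\pi_n}u_0 \le \EE_{\pi_{n+1}}u_0$, so $(\EE_{\pi_n}u_0)_n$ is nondecreasing, and $\EE_{\pi_n}u_0 \le \sup_{\pi\in P_t}\EE_\pi u_0 = \SS(t)u_0$ by definition of $\SS(t)$. Hence the monotone limit exists and is dominated by $\SS(t)u_0$. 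All the work lies in the reverse inequality, which reduces to showing that for every fixed $\pi\in P_t$ one has $\EE_\pi u_0 \le \lim_n \EE_{\pi_n}u_0$; taking the supremum over $\pi\in P_t$ then yields the claim.

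First I would treat the bounded case $(\PP,f)=(\PP_M,f_M)$, so that, with $R:=\|u_0\|_\infty$, the constant $L:=\sup_{q\in\PP}\sup_{\|v\|_\infty\le R}\|qv+f_q\|_\infty$ is finite by \eqref{bla1}. The heart of the argument is a one-step insertion estimate: for $a,b\ge 0$ and any $w$ with $\|w\|_\infty\le R$,
\[
 0 \le \EE_b\EE_a w - \EE_{a+b}w \le 2L(a+b)\quad\text{componentwise.}
\]
The lower bound is \eqref{monotone}. For the upper bound I would use $S_q(a+b)=S_q(b)S_q(a)$ together with the componentwise inequality $\sup_q A_q - \sup_q B_q \le \sup_q(A_q-B_q)$ to get $\EE_b\EE_a w - \EE_{a+b}w \le \sup_q S_q(b)\big(\EE_a w - S_q(a)w\big)$; since each $S_q(b)$ is an affine kernel and hence nonexpansive for $\|\cdot\|_\infty$, and since $\|\EE_a w - w\|_\infty\le aL$ by \eqref{bla2} (with $h_1=0$) and $\|S_q(a)w - w\|_\infty\le aL$ by the variation-of-constants formula \eqref{duhamel}, the estimate follows.

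Next I would run a telescoping argument. Fixing $\pi\in P_t$, write $\pi\setminus\pi_n=\{\tau_1,\dots,\tau_r\}$ with $r\le|\pi|$, and insert these points one at a time to produce partitions $\pi_n=\rho_0\subset\rho_1\subset\dots\subset\rho_r=\pi\cup\pi_n$, where $\rho_j$ arises from $\rho_{j-1}$ by splitting a single interval $[\mu,\nu]$ (of length $\le|\pi_n|_\infty$) at $\tau_j$. Factoring $\EE_{\rho_{j-1}}=A\circ\EE_{\nu-\mu}\circ B$ and $\EE_{\rho_j}=A\circ\EE_{\tau_j-\mu}\EE_{\nu-\tau_j}\circ B$, where $A$ is a composition of kernels (hence nonexpansive) and $\|Bu_0\|_\infty\le R$, the insertion estimate gives $\|\EE_{\rho_j}u_0-\EE_{\rho_{j-1}}u_0\|_\infty\le 2L(\nu-\mu)\le 2L|\pi_n|_\infty$. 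Summing over $j$ and using \eqref{monotone} to bound $\EE_\pi u_0\le\EE_{\pi\cup\pi_n}u_0$, I obtain $\EE_\pi u_0 \le \EE_{\pi_n}u_0 + 2|\pi|\,L\,|\pi_n|_\infty$; letting $n\to\infty$ with $|\pi_n|_\infty\searrow 0$ yields $\EE_\pi u_0\le\lim_n\EE_{\pi_n}u_0$, settling the bounded case.

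Finally I would remove the boundedness assumption via Remark \ref{uniclass1}. Writing $\EE^M$ and $\SS_M$ for the objects associated with $(\PP_M,f_M)$, one has $\EE^M_{\pi_n}u_0\le\EE_{\pi_n}u_0$ (since $\PP_M\subset\PP$ and all operators are monotone), while the bounded case gives $\lim_n\EE^M_{\pi_n}u_0=\SS_M(t)u_0$. Hence $\SS_M(t)u_0\le\liminf_n\EE_{\pi_n}u_0$, and letting $M\to\infty$ with $\SS_M(t)u_0\nearrow\SS(t)u_0$ completes the reverse inequality. The main obstacle is the one-step insertion estimate and arranging the telescoping so the accumulated error stays bounded by $|\pi|\cdot|\pi_n|_\infty$; the nonexpansiveness of kernels in $\|\cdot\|_\infty$ is exactly what makes the errors add up linearly in the number of inserted points rather than compounding.
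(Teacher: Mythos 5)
Your proof is correct, and its skeleton coincides with the paper's: the easy inequality follows from \eqref{monotone} and the definition of $\SS(t)$, the hard inequality is reduced to a fixed $\pi\in P_t$, and the general case is deduced from the bounded case via $(\PP_M,f_M)$ and $\SS_M(t)u_0\nearrow\SS(t)u_0$, exactly as in the paper. The genuine difference is the key step for bounded $f$. The paper fixes $\pi=\{t_0,\ldots,t_m\}$ and, using $|\pi_n|_\infty\searrow 0$, extracts sub-partitions $\pi_n'=\{t_0^n,\ldots,t_m^n\}\subset\pi_n$ of the same cardinality as $\pi$ with $t_i^n\to t_i$; then $\EE_{\pi_n}u_0\geq\EE_{\pi_n'}u_0$ by \eqref{monotone}, and $\EE_{\pi_n'}u_0\to\EE_\pi u_0$ by the Lipschitz continuity of $h\mapsto\EE_h u_0$ (Lemma \ref{Ehcont}). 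You go the opposite way: instead of approximating $\pi$ from inside $\pi_n$, you pass to the common refinement $\pi\cup\pi_n$ and control the increase caused by inserting the at most $|\pi|$ points of $\pi\setminus\pi_n$ one at a time, via the one-step estimate $0\leq\EE_b\EE_a w-\EE_{a+b}w\leq 2L(a+b)$ and non-expansiveness of the surrounding composition, which yields the quantitative bound $\EE_\pi u_0\leq\EE_{\pi_n}u_0+2|\pi|\,L\,|\pi_n|_\infty$. Both routes rest on the same raw material (\eqref{duhamel}, \eqref{bla1}, \eqref{bla2}, \eqref{monotone}); in fact, the paper's assertion that $\EE_{\pi_n'}u_0\to\EE_\pi u_0$ silently requires the same comparison of two $m$-fold compositions (telescoping plus non-expansiveness) that you carry out explicitly, so your argument is arguably more complete at that point, and it produces an explicit rate, linear in $|\pi_n|_\infty$, which the paper's qualitative limit does not. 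What the paper's version buys is brevity. One small point of rigor in your write-up: non-expansiveness w.r.t.\ $\|\cdot\|_\infty$ does not follow from the kernel property (monotonicity and preservation of constants) alone; here it holds because $S_q(b)$ is affine with stochastic linear part $e^{bq}$, and because the $\EE_h$ and their compositions are \emph{convex} kernels, hence translation invariant, i.e.\ $\EE_h(u+\al)=\EE_h u+\al$ for constants $\al$, and monotone translation-invariant maps are non-expansive; this is worth stating, since it is exactly what licenses both your insertion estimate and your telescoping step.
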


\begin{proof}
 Let $u_0\in \R^d$. For $t=0$ the statement is trivial. Therefore, assume that $t>0$ and let
  \begin{equation}\label{vdef}
  u_\infty:=\sup_{n\in \N} \EE_{\pi_n}u_0.
 \end{equation}
 As $\pi_n\subset \pi_{n+1}$ for all $n\in \N$, \eqref{monotone} implies that
 \[
  \EE_{\pi_n}u_0\nearrow u_\infty, \quad n\to \infty.
 \]
 Since $(\pi_n)_{n\in \N}\subset P_t$, we obtain that
 \[
  u_\infty \leq \SS(t)u_0.
 \]
 Next, we assume that the family $f$ is bounded. Let $\pi=\{t_0,t_1,\ldots, t_m\}\in P_t$ with $m\in \N$ and $0=t_0<t_1<\ldots <t_m=t$. Since $|\pi_n|_\infty\searrow 0$ as $n\to \infty$, we may w.l.o.g. assume that $|\pi_n|\geq m+1$ for all $n\in \N$. Again, since $|\pi_n|_\infty\searrow 0$ as $n\to \infty$, there exist $0=t_0^n<t_1^n<\ldots <t_m^n=t$ for all $n\in \N$ with $\pi_n':=\{t_0^n,t_1^n,\ldots, t_m^n\}\subset \pi_n$ and $t_i^n\to t_i$ as $n\to \infty$ for all $i\in \{1,\ldots, m\}$. Then, by Lemma \ref{Ehcont}, we have that
 \[
  \|\EE_\pi u_0-\EE_{\pi_n'}u_0\|_\infty\to 0, \quad n\to \infty
 \]
 and therefore,
 \[
  u_\infty\geq \EE_{\pi_n}u_0\geq \EE_{\pi_n'}u_0\geq \EE_\pi u_0-\|\EE_\pi u_0-\EE_{\pi_n'}u_0\|_\infty.
 \]
 Letting $n\to \infty$, we obtain that $u_\infty\geq \EE_\pi u_0$. Taking the supremum over all $\pi\in P_t$ yields the assertion for bounded $f$.\\
 Now, let $f$ again be (possibly) unbounded. It remains to show that $u_\infty\geq \SS(t)u_0$. By the previous step, we have that $u_\infty\geq u_{\infty,M}=\SS_M(t)$ for all $M\geq 0$, where $u_{\infty,M}$ is given by \eqref{vdef} but w.r.t. $(\PP_M,f_M)$ instead of $(\PP,f)$. Since $\SS_M(t)u_0\nearrow \SS(t)u_0$ as $M\to \infty$, we obtain that $u_\infty\geq \SS(t) u_0$, which ends the proof. 
\end{proof}

Choosing e.g. $\pi_n=\big\{\frac{kt}{2^n}\colon k\in \{0,\ldots ,2^n\}\big\}$ or $\pi_n=\big\{\frac{kt}{n!}\colon k\in \{0,\ldots, n!\}\big\}$ in Lemma \ref{monlimit}, we obtain the following corollaries.

\begin{corollary}\label{seq}
 For all $t>0$ there exists a sequence $(\pi_n)_{n\in \N}\subset P_t$ with
 \[
  \EE_{\pi_n}u_0\nearrow \SS(t)u_0
 \]
 as $n\to \infty$ for all $u_0\in \R^d$.
\end{corollary}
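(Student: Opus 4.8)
The plan is to produce an explicit nested sequence of partitions whose mesh tends to zero and then invoke Lemma \ref{monlimit}, which already contains all the analytic content. Concretely, for a fixed $t>0$ I would take the dyadic refinements
\[
 \pi_n:=\Big\{\tfrac{kt}{2^n}\colon k\in \{0,\ldots, 2^n\}\Big\},\quad n\in \N.
\]

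First I would check that $(\pi_n)_{n\in \N}\subset P_t$: indeed $0\in \pi_n$ (take $k=0$), the set $\pi_n$ is finite with $|\pi_n|=2^n+1$, and $\max \pi_n=t$ (take $k=2^n$). Next I would verify the nestedness hypothesis $\pi_n\subset \pi_{n+1}$, which is immediate from the identity $\tfrac{kt}{2^n}=\tfrac{(2k)t}{2^{n+1}}$, so that every grid point of $\pi_n$ reappears in $\pi_{n+1}$. Finally I would compute the mesh size $|\pi_n|_\infty=\tfrac{t}{2^n}$, which decreases monotonically to $0$ as $n\to \infty$.

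Having verified the three hypotheses of Lemma \ref{monlimit}, namely membership in $P_t$, nestedness, and vanishing mesh size, the conclusion $\EE_{\pi_n}u_0\nearrow \SS(t)u_0$ for every $u_0\in \R^d$ is exactly the assertion of that lemma, and nothing further is required.

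There is essentially no obstacle here: the monotone pointwise convergence, including the reduction from unbounded to bounded $f$ via the approximation $\SS_M(t)u_0\nearrow \SS(t)u_0$, is already encapsulated in Lemma \ref{monlimit}, so the corollary is merely its specialization to a concrete admissible sequence. The only point demanding care is confirming that the chosen sequence genuinely satisfies all three hypotheses, which the dyadic grid does. I note that the factorial grid $\pi_n=\{\tfrac{kt}{n!}\colon k\in\{0,\ldots,n!\}\}$ suggested in the text works equally well, since $\tfrac{kt}{n!}=\tfrac{k(n+1)t}{(n+1)!}$ gives nestedness and $|\pi_n|_\infty=\tfrac{t}{n!}\searrow 0$.
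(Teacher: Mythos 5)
Your proof is correct and matches the paper's own argument exactly: the corollary is stated there as an immediate consequence of Lemma \ref{monlimit} applied to the dyadic grids $\pi_n=\big\{\tfrac{kt}{2^n}\colon k\in \{0,\ldots ,2^n\}\big\}$ (or the factorial grids), precisely the sequence you chose and verified. Nothing is missing.
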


\begin{corollary}\label{seq1}
 For all $t\geq 0$ and $u_0\in \R^d$ we have that
 \[
  \SS(t)u_0=\sup_{n\in \N} \EE_{\frac{1}{n}}^n u_0=\lim_{n\to \infty}\EE_{2^{-n}}^{2^n} u_0.
 \]
\end{corollary}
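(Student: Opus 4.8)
The plan is to read both displayed suprema as ranging over uniform partitions of $[0,t]$ and then reduce everything to Lemma \ref{monlimit}, the single delicate point being that the uniform partitions into $n$ equal pieces are \emph{not} nested in $n$, so that lemma cannot be invoked for them directly. Concretely, I read the displayed fractions as the mesh sizes of the uniform partitions of $[0,t]$, namely $\EE_{1/n}^n u_0 = \EE_{\sigma_n}u_0$ with $\sigma_n := \{kt/n : k=0,\ldots,n\}\in P_t$, and $\EE_{2^{-n}}^{2^n}u_0 = \EE_{\pi_n}u_0$ with $\pi_n := \{kt/2^n : k=0,\ldots,2^n\}\in P_t$. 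The case $t=0$ is trivial, since every operator in sight collapses to the identity, so I would assume $t>0$ and fix $u_0\in\R^d$.

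First I would dispose of the dyadic family. The partitions $\pi_n$ are nested, $\pi_n\subset\pi_{n+1}$, with $|\pi_n|_\infty = t2^{-n}\searrow 0$, so Lemma \ref{monlimit} applies verbatim and gives $\EE_{\pi_n}u_0\nearrow\SS(t)u_0$ as $n\to\infty$. By \eqref{monotone} the sequence $\big(\EE_{\pi_n}u_0\big)_n$ is componentwise nondecreasing, so its limit coincides with its supremum; this already establishes the second equality $\SS(t)u_0 = \lim_{n\to\infty}\EE_{t2^{-n}}^{2^n}u_0 = \sup_n \EE_{t2^{-n}}^{2^n}u_0$.

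It then remains to identify $\sup_n\EE_{t/n}^n u_0$ with $\SS(t)u_0$, which I would do by two inequalities. The upper bound is immediate from the definition of the Nisio semigroup: each $\sigma_n$ lies in $P_t$, so $\EE_{t/n}^n u_0 = \EE_{\sigma_n}u_0 \leq \sup_{\pi\in P_t}\EE_\pi u_0 = \SS(t)u_0$, whence $\sup_n\EE_{t/n}^n u_0 \leq \SS(t)u_0$. For the reverse bound — the step that sidesteps the missing nestedness — I would observe that the uniform partition with $n=2^m$ pieces is \emph{exactly} the dyadic partition $\pi_m$, so that $\{\EE_{t2^{-m}}^{2^m}u_0 : m\in\N\}$ is a subfamily of $\{\EE_{t/n}^n u_0 : n\in\N\}$; taking suprema and using the dyadic step gives $\SS(t)u_0 = \sup_m\EE_{t2^{-m}}^{2^m}u_0 \leq \sup_n\EE_{t/n}^n u_0$. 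Combining the two inequalities yields $\sup_n\EE_{t/n}^n u_0 = \SS(t)u_0$, and together with the dyadic identity this is the full chain of equalities.

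I expect the main (indeed the only) obstacle to be precisely the non-monotonicity of $n\mapsto\EE_{t/n}^n u_0$: since consecutive uniform partitions do not refine one another, the supremum here cannot be rewritten as a monotone limit, and Lemma \ref{monlimit} is simply inapplicable to this family. The resolution is not to attempt any refinement argument on the full uniform family at all, but to run it only on the dyadic subfamily, where nestedness holds, and then to transfer the resulting lower bound to the uniform family for free via the inclusion $\{2^m : m\in\N\}\subset\N$. Everything else is bookkeeping with the componentwise order and the definition of $\SS(t)$.
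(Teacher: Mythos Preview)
Your argument is correct and follows essentially the same route as the paper, which simply points at Lemma \ref{monlimit} applied to the nested dyadic partitions $\{kt/2^n\}$ (for the limit) and the nested factorial partitions $\{kt/n!\}$ (for the supremum); your choice to recycle the dyadic family for the lower bound on $\sup_n \EE_{t/n}^n u_0$ via the inclusion $\{2^m\}\subset\N$ achieves exactly what the paper's factorial suggestion does via $\{n!\}\subset\N$. Your explicit reading of $\EE_{1/n}^n$ and $\EE_{2^{-n}}^{2^n}$ as uniform partitions of $[0,t]$ is the intended one, and you have correctly isolated and dealt with the one non-automatic point (non-nestedness of the partitions $\sigma_n$), which the paper leaves implicit.
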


\begin{proposition}\label{semchains}
 The family $\SS=(\SS(t))_{t\geq 0}$ defines a semigroup of convex kernels from $\R^d$ to $\R^d$. In particular, for all $s,t\geq 0$ we have the dynamic programming principle
\begin{equation}\label{111}
 \SS(s+t)=\SS(s)\SS(t).
\end{equation}
Moreover, the Nisio semigroup $\SS$ of $(\PP,f)$ coincides with the semigroup envelope of $(\PP,f)$ (cf. Definition \ref{nisiosg}).
\end{proposition}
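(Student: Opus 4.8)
The plan is to treat the three assertions separately. The convex-kernel property of each $\SS(t)$ has already been recorded right after Definition \ref{nisiosg1}, and $\SS(0)=I$ is immediate since $P_0=\{\{0\}\}$ and $\EE_{\{0\}}=\EE_0=I$. Hence the substantive work is the semigroup identity \eqref{111}, after which the coincidence with the semigroup envelope follows by checking the two defining properties in Definition \ref{nisiosg}.

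For \eqref{111} I would establish the two inequalities separately. For ``$\leq$'', given $\pi\in P_{s+t}$ I refine it to $\pi\cup\{s\}$; by the refinement monotonicity \eqref{monotone} this does not decrease $\EE_\pi u_0$, and since $s$ now belongs to the partition it splits into a block $\pi_1\in P_s$ on $[0,s]$ followed by a shifted block $\pi_2\in P_t$ on $[s,s+t]$, so that $\EE_{\pi\cup\{s\}}=\EE_{\pi_1}\EE_{\pi_2}$. Using $\EE_{\pi_2}u_0\leq \SS(t)u_0$, the monotonicity of the kernel $\EE_{\pi_1}$, and the definition of $\SS(s)$ gives $\EE_\pi u_0\leq \SS(s)\SS(t)u_0$; taking the supremum over $\pi\in P_{s+t}$ yields the inequality. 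For ``$\geq$'', concatenating $\pi_1\in P_s$ with a shifted copy of $\pi_2\in P_t$ produces a partition in $P_{s+t}$, whence $\EE_{\pi_1}\EE_{\pi_2}u_0\leq \SS(s+t)u_0$; the task is then to push the two suprema (first over $\pi_2$, then over $\pi_1$) past the nonlinear operators. This is exactly where Lemma \ref{monlimit} is needed: I pick increasing partitions $\pi_2^n$ with $\EE_{\pi_2^n}u_0\nearrow \SS(t)u_0$ and use that $\EE_{\pi_1}$, as a finite composition of convex and hence continuous kernels, preserves this monotone limit, obtaining $\SS(s+t)u_0\geq \EE_{\pi_1}\SS(t)u_0$; a second application of Lemma \ref{monlimit} to $\SS(s)$ at the vector $\SS(t)u_0$ gives $\SS(s+t)u_0\geq \SS(s)\SS(t)u_0$. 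I expect this interchange of suprema with the nonlinear operators -- the only point at which continuity is genuinely used -- to be the main obstacle.

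Finally, to identify $\SS$ with the semigroup envelope of $(\PP,f)$, property (i) of Definition \ref{nisiosg} is clear, since the one-block partition $\{0,t\}\in P_t$ gives $\SS(t)u_0\geq \EE_t u_0=\sup_{q\in\PP}S_q(t)u_0\geq S_q(t)u_0$ for each $q$. For the minimality in (ii), let $\mathscr T$ be any semigroup with $\mathscr T(t)u_0\geq S_q(t)u_0$ for all $q\in\PP$, $t\geq 0$ and $u_0\in\R^d$. I would prove $\EE_\pi u_0\leq \mathscr T(t)u_0$ for every $\pi=\{0=t_0<\cdots<t_m=t\}\in P_t$ by induction on $m$, peeling off the leftmost block: writing $\EE_\pi u_0=\sup_{q\in\PP}S_q(t_1)\EE_{\pi'}u_0$ with $\pi'\in P_{t-t_1}$, the inductive bound $\EE_{\pi'}u_0\leq \mathscr T(t-t_1)u_0$ together with the monotonicity of the \emph{linear} kernels $S_q(t_1)$ and the domination $\mathscr T(t_1)\geq S_q(t_1)$ gives $S_q(t_1)\EE_{\pi'}u_0\leq S_q(t_1)\mathscr T(t-t_1)u_0\leq \mathscr T(t_1)\mathscr T(t-t_1)u_0=\mathscr T(t)u_0$ for every $q\in\PP$. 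Note that this argument needs no monotonicity of $\mathscr T$ itself, only that of the $S_q$. Taking the supremum over $q$ and then over $\pi\in P_t$ gives $\SS(t)u_0\leq \mathscr T(t)u_0$, so that $\SS$ is the smallest semigroup dominating all $S_q$; together with property (i) this identifies $\SS$ as the semigroup envelope of $(\PP,f)$.
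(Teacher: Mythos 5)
Your proof is correct and takes essentially the same route as the paper's: the refinement $\pi\cup\{s\}$ with splitting at $s$ for one inequality, Corollary \ref{seq}/Lemma \ref{monlimit} together with continuity of the convex kernels for the other, and a peeling induction combining monotonicity with the pointwise domination $\EE_h\leq \mathscr T(h)$ for the envelope minimality. The only cosmetic differences are that the paper gets the ``$\geq$'' bound by taking the supremum over $\pi_0\in P_s$ directly from the definition of $\SS(s)$ (so no second application of Lemma \ref{monlimit} is needed), and it phrases the minimality induction via monotonicity of $\EE_h$ rather than of the individual affine kernels $S_q$ --- which amounts to the same thing, since $\EE_h$ inherits monotonicity from the $S_q(h)$.
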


\begin{proof}
 It remains to show the semigroup property \eqref{111}. Let $u_0\in \R^d$. If $s=0$ or $t=0$ the statement is trivial. Therefore, let $s,t>0$, $\pi_0\in P_{s+t}$ and $\pi:=\pi_0\cup \{s\}$. Then, $\pi\in P_{s+t}$ with $\pi_0\subset \pi$. Hence, by \eqref{monotone}, we obtain that
 \[
  \EE_{\pi_0} u_0\leq \EE_{\pi}u_0.
 \]
 Let $m\in \N$, $0=t_0<t_1<\ldots t_m=s+t$ with $\pi=\{t_0,\ldots, t_m\}$ and $i\in \{1,\ldots, m\}$ with $t_i=s$. Then,
 \[
  \pi_1:=\{t_0,\ldots, t_i\}\in P_s\quad \text{and} \quad \pi_2:=\{t_i-s,\ldots, t_m-s\}\in P_t\,
 \]
 with
 \[
  \EE_{\pi_1}=\EE_{t_1-t_0}\cdots \EE_{t_i-t_{i-1}} \quad \text{and}\quad \EE_{\pi_2}=\EE_{t_{i+1}-t_i}\cdots \EE_{t_m-t_{m-1}}.\qquad
  \]
 We thus see that
 \begin{align*}
  \EE_{\pi_0}u_0&\leq \EE_\pi u_0=\EE_{t_1-t_0}\cdots \EE_{t_m-t_{m-1}}u_0=\big(\EE_{t_1-t_0}\cdots \EE_{t_i-t_{i-1}}\big)\big(\EE_{t_{i+1}-t_i}\cdots \EE_{t_m-t_{m-1}}u_0\big)\\
  &=\EE_{\pi_1}\EE_{\pi_2}u_0\leq \EE_{\pi_1}\big(\SS(t)u_0\big)\leq \SS(s)\SS(t)u_0.
 \end{align*}
 Taking the supremum over all $\pi_0\in P_{s+t}$, it follows that $\SS(s+t)u_0\leq \SS(s)\SS(t)u_0$.\\
 Now, let $(\pi_n)_{n\in \N}\subset P_t$ with $\EE_{\pi_n}u_0\nearrow \SS(t)u_0$ as $n\to \infty$ (see Corollary \ref{seq}), and fix $\pi_0\in P_s$. Then, for all $n\in \N$,
 \[
  \pi_n':=\pi_0\cup \{s+\tau\colon \tau \in \pi_n\}\in P_{s+t}
 \]
 with $\EE_{\pi_n'}=\EE_{\pi_0}\EE_{\pi_n}$. Therefore,
 \[
  \EE_{\pi_0}\big(\SS(t)u_0\big)=\lim_{n\to \infty}\EE_{\pi_0}\EE_{\pi_n}u_0=\lim_{n\to \infty}\EE_{\pi_n'}u_0\leq \SS(s+t)u_0.
 \]
 Taking the supremum over all $\pi_0\in P_s$, yields that $\SS(s)\SS(t)u_0\leq \SS(s+t)u_0$. It remains to show that the family $\SS$ is the semigroup envelope of $(\PP,f)$. We have already shown that $\SS$ is a semigroup and, by definition, $\SS(t)u_0\geq S_q(t)u_0$ for all $t\geq 0$, $u_0\in \R^d$ and $q\in \PP$. Let $\big(\mathscr T(t)\big)_{t\geq 0}$ be a semigroup with $\mathscr T(t)u_0\geq S_q(t)u_0$ for all $t\geq 0$, $u_0\in \R^d$ and $q\in \PP$. Then,
 \[
  \EE_h u_0\leq \mathscr T(h)u_0 \quad \text{for all }h\geq 0\text{ and }u_0\in \R^d.
 \]
 Since $\big(\mathscr T(t)\big)_{t\geq 0}$ is a semigroup and $\EE_h$ is monotone for all $h\geq 0$, it follows that
 \[
 \EE_\pi u_0\leq \mathscr T(t)u_0\quad \text{for all }t\geq 0,\; \pi\in P_t\text{ and }u_0\in \R^d.
 \]
 Taking the supremum over all $\pi\in P_t$, it follows that $\SS(t)u_0\leq \mathscr T(t)u_0$ for all $t\geq 0$ and $u_0\in \R^d$.
\end{proof}

Proposition \ref{semchains} and \cite[Theorem 5.6]{MR3824739} imply the following corollary.

\begin{corollary}
There exists a convex Markov chain $\big(\Om,\FF,\EE,(X_t)_{t\geq 0}\big)$ such that
\[
 \big(\SS(t)u_0\big)_i=\EE_i\big(u_0(X_t)\big)
\]
for all $u_0\in \R^d$, $t\geq 0$ and $i\in \{1,\ldots, d\}$.
\end{corollary}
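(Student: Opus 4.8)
The plan is to invoke the convex Kolmogorov extension theorem of Denk et al.\ \cite[Theorem 5.6]{MR3824739} with the transition structure supplied by the Nisio semigroup $\SS$. The point is that Proposition \ref{semchains} has already produced exactly the object this extension theorem consumes: a family $\big(\SS(t)\big)_{t\geq 0}$ of convex kernels on the finite state space $S=\{1,\ldots,d\}$ satisfying the semigroup identity $\SS(s+t)=\SS(s)\SS(t)$ together with $\SS(0)=I$. Thus almost all of the work lies in checking that the hypotheses of \cite[Theorem 5.6]{MR3824739} are met and then reading off the conclusion.

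First I would fix the canonical setting. Take $\Om:=S^{[0,\infty)}$ with the product $\sigma$-algebra $\FF$ and let $X_t\colon\Om\to S$ be the coordinate projections, so that $(ii)$ of Definition \ref{markovprocess} holds automatically. For each $i\in\{1,\ldots,d\}$ and each finite set of times $0\le s_1<\cdots<s_m$ I would prescribe the finite-dimensional convex expectation of a cylinder functional $v_0(X_{s_1},\ldots,X_{s_m})$ by iterating the kernels $\SS(s_{k}-s_{k-1})$ starting from the state $i$; concretely, $\EE_{i,t}(u_0):=\big(\SS(t)u_0\big)_i$ is the one-step transition and the multi-step expectation is the nested composition of such transitions. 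The semigroup identity from Proposition \ref{semchains} is precisely the Chapman--Kolmogorov consistency condition that makes this prescription well defined and independent of the insertion of intermediate time points, while the normalization $\SS(0)=I$ yields $\EE\big(u_0(X_0)\big)=u_0$, i.e.\ the second half of $(iii)$ in Definition \ref{markovprocess}.

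Next I would apply \cite[Theorem 5.6]{MR3824739} to this consistent family. Since $S$ is finite, each convex kernel $\SS(t)$ admits a dual representation by (finitely supported, hence trivially countably additive) probability measures on $S$, so each $\big(\Om,\FF,\EE_i\big)$ is a genuine convex expectation space rather than merely a finitely additive one; this is the regularity the extension theorem requires. The theorem then returns the convex expectation $\EE=(\EE_1,\ldots,\EE_d)^T$ on the full path space and guarantees the Markov property $(iv)$ of Definition \ref{markovprocess}, with the transition operator over a time increment $t$ given by $\EE_{i,t}(u_0)=\big(\SS(t)u_0\big)_i$. Reading off the one-dimensional marginal at a single time $t$ gives $\big(\SS(t)u_0\big)_i=\EE_i\big(u_0(X_t)\big)$, which is the claim.

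The step I expect to require the most care is the verification that the family fed into \cite[Theorem 5.6]{MR3824739} literally matches that theorem's hypotheses --- in particular that the semigroup composition of the $\SS(t)$ coincides with the iterated kernel from which the theorem builds its process, and that the finite-state convex kernels satisfy whatever continuity condition is demanded so that the output expectation is countably additive and the Markov property holds in the precise form of Definition \ref{markovprocess}$(iv)$. Because $S$ is finite and each $\SS(t)$ is a convex kernel, I expect these regularity checks to reduce to routine bookkeeping once the identification of the transition mechanism with $\SS$ is in place.
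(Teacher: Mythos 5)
Your proposal is correct and follows the paper's own route: the paper proves this corollary exactly by combining Proposition \ref{semchains} (the Nisio semigroup is a semigroup of convex kernels) with the Kolmogorov-type extension theorem \cite[Theorem 5.6]{MR3824739}, which is precisely what you do. Your additional unpacking of the canonical path space, the consistency of the finite-dimensional expectations via the semigroup identity, and the countable additivity on the finite state space is a faithful elaboration of what that citation compresses into one line.
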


Restricting the time parameter of this process to $\N_0$, leads to a discrete-time Markov chain with transition operator $\SS(1)$ (cf. \cite[Example 5.3]{MR3824739}). It remains to show that the Nisio semigroup $\SS$ gives rise to the unique classical solution to the nonlinear ODE \eqref{odemain}.

\begin{remark}
 Assume that the set $\PP$ is bounded, i.e. $\sup_{q\in \PP} \|q\|<\infty$.
\begin{enumerate}
\item[a)] Since $\PP$ is bounded, it follows that $\QQ$ is Lipschitz continuous. Therefore, the Picard-Lindel\"of Theorem implies that, for every $u_0\in \R^d$, the initial value problem
  \begin{align}
   u'(t)&=\QQ u(t),\quad t\geq 0,\label{11}\\
   u(0)&=u_0,\notag
  \end{align}
has a unique solution $u\in C^1\big([0,\infty);\R^d\big)$. We will show that this solution $u$ is given by $u(t)=\SS(t)u_0$ for all $t\geq 0$. That is, the unique solution of the ODE \eqref{11} is given by the Nisio semigroup. 
\item[b)] Since $\PP$ is bounded, the mapping
\[
 \mathfrak{q}\colon \R^d\to \R^d,\quad u\mapsto \sup_{q\in \PP} qu
\]
is well-defined.
\end{enumerate}
\end{remark}

The following key estimate and its proof are a straightforward adaption of the proof of \cite[Proposition 5]{MR3290231} to our setup. Recall that, by Remark \ref{uniclass1}, the boundedness of the family $f$ implies the boundedness of the set $\PP$.
\begin{lemma}\label{intsemi}
 Assume that the family $f$ is bounded. Then,
 \[
  \SS(t) u_0-u_0\leq \int_0^t \Si(s)\QQ u_0\, {\rm d}s
 \]
 for all $u_0\in \R^d$ and $t\geq 0$. Here, $\big(\Si(t)\big)_{t\geq 0}$ is the Nisio semigroup w.r.t. the sublinear $Q$-operator $\mathfrak{q}$ from the previous remark, or more precisely, the Nisio semigroup w.r.t. $(\PP,f)$, where $f_q=0$ for all $q\in \PP$.
\end{lemma}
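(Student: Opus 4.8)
The plan is to reduce the claim to a uniform one-step estimate, to iterate it along a dyadic partition, and to pass to the limit through the monotone approximation of $\SS$. Since $f$ is assumed bounded, $\PP$ is bounded (Remark \ref{uniclass1}), so $\mathfrak{q}$ is well defined and $\Si$ is the Nisio semigroup of $(\PP,0)$; I will use freely that each $\Si(s)$ is a monotone kernel and that $\Si$ dominates every flow $e^{\cdot q}$ (Definition \ref{nisiosg}, Proposition \ref{semchains}). First I would record the one-step bound
\[
 \EE_h v-v\leq \int_0^h \Si(s)\QQ v\,{\rm d}s\qquad(\ast)
\]
for all $v\in\R^d$ and $h\geq 0$. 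Indeed, by \eqref{duhamel}, $\EE_h v-v=\sup_{q\in\PP}\int_0^h e^{sq}(qv+f_q)\,{\rm d}s$; for each $q$ one has $qv+f_q\leq\QQ v$ by definition of $\QQ$, the stochastic matrix $e^{sq}$ is monotone, and $e^{sq}\QQ v\leq\Si(s)\QQ v$ by the envelope property, so $\int_0^h e^{sq}(qv+f_q)\,{\rm d}s\leq\int_0^h\Si(s)\QQ v\,{\rm d}s$, and taking the supremum over $q$ gives $(\ast)$.

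The crux is a transport estimate,
\[
 \QQ\!\left(u_0+\int_0^\tau\Si(s)\QQ u_0\,{\rm d}s\right)\leq\Si(\tau)\QQ u_0\qquad(\dagger)
\]
for all $\tau\geq 0$, which says that the slope $\QQ$ of the evolved state is dominated by the sublinear transport of $\QQ u_0$. I would derive $(\dagger)$ from three soft ingredients, deliberately avoiding any (as yet unavailable) differentiability of $\SS$ or $\Si$: (1) the subadditivity $\QQ(v+u)\leq\QQ v+\mathfrak{q}u$, immediate from $\sup_q(A_q+B_q)\leq\sup_q A_q+\sup_q B_q$; (2) the sublinearity of $\mathfrak{q}$, giving $\mathfrak{q}\big(\int_0^\tau g\,{\rm d}s\big)\leq\int_0^\tau\mathfrak{q}g\,{\rm d}s$; and (3) the one-sided generator bound $\Si(\tau)z-z\geq\int_0^\tau\mathfrak{q}\Si(s)z\,{\rm d}s$. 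For (3), $s\mapsto\Si(s)z$ is Lipschitz by Lemma \ref{unicont} (applied to $(\PP,0)$), hence absolutely continuous with $\Si(\tau)z-z=\int_0^\tau\tfrac{{\rm d}}{{\rm d}s}\Si(s)z\,{\rm d}s$; and since $\Si(\delta)y-y\geq e^{\delta q}y-y$ for every $q$, dividing by $\delta$ and letting $\delta\searrow 0$ yields $\tfrac{{\rm d}}{{\rm d}s}\Si(s)z\geq\sup_q q\,\Si(s)z=\mathfrak{q}\Si(s)z$ almost everywhere. Applying (1) with $v=u_0$ and $u=\int_0^\tau\Si(s)\QQ u_0\,{\rm d}s$, then (2) and (3), gives $(\dagger)$.

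Finally I would iterate. Fix $t>0$, $n\in\N$, put $h=t2^{-n}$, $N=2^n$, and set $a_k:=\EE_h^k u_0$ and $w_k:=u_0+\int_0^{kh}\Si(s)\QQ u_0\,{\rm d}s$. A short induction shows $a_k\leq w_k$: the step uses $(\ast)$ to get $a_{k+1}\leq a_k+\int_0^h\Si(s)\QQ a_k\,{\rm d}s$, then monotonicity of $\QQ$ with $a_k\leq w_k$ to get $\QQ a_k\leq\QQ w_k$, then $(\dagger)$ at $\tau=kh$ to get $\QQ w_k\leq\Si(kh)\QQ u_0$, and finally monotonicity of $\Si(s)$ together with the semigroup property to turn $\int_0^h\Si(s)\QQ a_k\,{\rm d}s$ into $\int_{kh}^{(k+1)h}\Si(s)\QQ u_0\,{\rm d}s$, so that $a_{k+1}\leq w_{k+1}$. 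With $k=N$ and $Nh=t$ this reads $\EE_h^N u_0\leq u_0+\int_0^t\Si(s)\QQ u_0\,{\rm d}s$; since $\EE_{t2^{-n}}^{2^n}u_0\nearrow\SS(t)u_0$ as $n\to\infty$ by Corollary \ref{seq1} (Lemma \ref{monlimit}) while the right-hand side is independent of $n$, letting $n\to\infty$ yields the assertion.

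I expect ingredient (3) of $(\dagger)$ to be the delicate point: neither $\SS$ nor $\Si$ is known to be differentiable at this stage, $\QQ$ does not commute with the linear flows $e^{sq}$, and $\mathfrak{q}$ is not monotone, so one cannot naively move $\mathfrak{q}$ through $\Si$. The resolution is to use only the envelope inequality $\Si(\delta)\geq e^{\delta q}$ and time-Lipschitz continuity to extract the one-sided (Dini) generator bound, and to arrange the induction so that the coupled bounds $a_k\leq w_k$ and $\QQ w_k\leq\Si(kh)\QQ u_0$ reinforce one another rather than chase each other circularly.
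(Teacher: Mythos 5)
Your architecture---the one-step bound $(\ast)$, the transport estimate $(\dagger)$, dyadic iteration, and the monotone limit via Corollary \ref{seq1}---is sound, and your ingredients (1)--(3) for $(\dagger)$ are all correct: in particular, (3) uses only the envelope inequality $\Si(\delta)\geq e^{\delta q}$, the semigroup property (Proposition \ref{semchains}) and Lipschitz continuity in time (Lemma \ref{unicont} applied with $f\equiv 0$), all of which are available at this stage. However, the induction step contains a genuine error: you pass from $a_k\leq w_k$ to $\QQ a_k\leq\QQ w_k$ by invoking ``monotonicity of $\QQ$'', and convex $Q$-operators are not monotone. Monotonicity is a property of the kernels $e^{hq}$, $\EE_h$, $\Si(s)$, $\SS(t)$, not of the generators: for $d=2$ take the $Q$-matrix $q$ with $q_{11}=q_{22}=-1$ and $q_{12}=q_{21}=1$, and $u=(-1,0)^T\leq(0,0)^T=v$; then $qu=(1,-1)^T\not\leq(0,0)^T=qv$. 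The same failure persists for $\QQ u=\sup_{q\in\PP}(qu+f_q)$. Indeed, you observe yourself in your closing paragraph that $\mathfrak{q}$ is not monotone---but $\QQ$ has exactly the same structure as $\mathfrak{q}$, so the step you rely on is precisely the move you claim to be avoiding. As written, the induction does not close.

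The gap is repairable with the tools you already have, by reordering the step so that a monotone operator, rather than $\QQ$, absorbs the induction hypothesis. Since $\EE_h$ is a convex kernel, it is monotone, so $a_{k+1}=\EE_h a_k\leq\EE_h w_k$; now apply $(\ast)$ at $v=w_k$, then $(\dagger)$ at $\tau=kh$ together with the monotonicity of $\Si(s)$, and finally the semigroup property of $\Si$:
\[
 \EE_h w_k\leq w_k+\int_0^h\Si(s)\QQ w_k\,{\rm d}s\leq w_k+\int_0^h\Si(s)\Si(kh)\QQ u_0\,{\rm d}s
 =w_k+\int_{kh}^{(k+1)h}\Si(s)\QQ u_0\,{\rm d}s=w_{k+1}.
\]
With this correction your proof is complete, and it is a genuinely different route from the paper's: the paper proves $\EE_\pi u_0-u_0\leq\int_0^{\max\pi}\Si(s)\QQ u_0\,{\rm d}s$ for \emph{arbitrary} partitions by induction on the cardinality $|\pi|$, using the centered domination $\EE_{\pi'}v-\EE_{\pi'}u\leq\Si(t_{m-1})(v-u)$, so that $\QQ$ is only ever evaluated at the fixed vector $u_0$ and no analogue of $(\dagger)$ is needed; your argument instead evaluates everything along the explicit majorant $w_k$, paying for this with the transport estimate $(\dagger)$, which you obtain from a.e.\ differentiability of the Lipschitz map $s\mapsto\Si(s)z$. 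Both are legitimate; the paper's induction is softer (no differentiation), while yours makes the underlying mechanism---the slope of the evolved state is dominated by the sublinearly transported initial slope---explicit.
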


\begin{proof}
Let $u_0\in \R^d$ and $h>0$. Then, by \eqref{duhamel}, we have that
\[
 S_q(h)u_0-u_0=\int_0^h e^{sq}\big(qu_0+f_q\big)\, {\rm d}s\leq \int_0^h \Si(s)\QQ u_0\, {\rm d}s.
\]
Notice that, by Lemma \ref{unicont}, the mapping $[0,\infty)\to \R^d,\; t\mapsto \Si(t) v_0$ is continuous and therefore locally integrable for all $v_0\in \R^d$. Hence, for all $\tau\geq 0$,
\begin{equation}\label{intsemiproof0}
\EE_h u_0-u_0\leq \int_0^h \Si(s)\QQ u_0\, {\rm d}s=\int_\tau^{\tau+h} \Sigma(s-\tau)\QQ u_0\, {\rm d}s.
\end{equation}
Next, we show that
 \begin{equation}\label{intsemiproof}
  \EE_\pi u_0-u_0\leq \int_0^{\max \pi} \Si(s)\QQ u_0 \, {\rm d}s
 \end{equation}
 for all $\pi\in P$ by an induction on $m=| \pi|$, where $|\pi |$ denotes the cardinality of $\pi$. If $m=1$, i.e. if $\pi=\{0\}$, the statement is trivial. Hence, assume that
  \[
  \EE_{\pi'} u_0-u_0\leq \int_0^{\max \pi'} \Si(s)\QQ u_0 \, {\rm d}s
 \]
 for all $\pi'\in P$ with $|\pi'|=m$ for some $m\in \N$. Let $\pi=\{t_0,t_1,\ldots, t_m\}\in P$ with $0=t_0<t_1<\ldots <t_m$ and $\pi':=\pi\sm \{t_m\}$. Then, we obtain that
 \begin{align*}
  \EE_\pi u_0 -\EE_{\pi'}u_0&\leq \Si(t_{m-1})\big(\EE_{t_m-t_{m-1}}u_0-u_0\big)\leq \Si(t_{m-1})\bigg(\int_{t_{m-1}}^{t_m} \Si(s-t_{m-1})\QQ u_0\, {\rm d}s\bigg)\\
  &\leq\int_{t_{m-1}}^{t_m}\Si(s)\QQ u_0\, {\rm d}s,
 \end{align*}
 where, in the second inequality, we used \eqref{intsemiproof0} with $h=t_m-t_{m-1}$ and $\tau=t_{m-1}$, and, in the last inequality, we used the sublinearity of $\Si(t)$. Using the induction hypothesis, we thus see that
 \begin{align*}
 \EE_\pi u_0 -u_0&= \big(\EE_\pi u_0 -\EE_{\pi'}u_0\big)+\big(\EE_{\pi'} u_0-u_0\big)\leq \int_{t_{m-1}}^{t_m}\Si(s)\QQ u_0\, {\rm d}s+\int_{0}^{t_{m-1}}\Si(s)\QQ u_0\, {\rm d}s\\
 &=\int_0^{\max \pi}\Si(s)\QQ u_0\, {\rm d}s.
 \end{align*}
 \item By \eqref{intsemiproof}, it follows that
 \[
  \EE_\pi u_0-u_0\leq \int_0^t\Si(s)\QQ u_0\, {\rm d}s
 \]
 for all $\pi\in P_t$. Taking the supremum over all $\pi\in P_t$ we obtain the assertion.
\end{proof}
 
The following proposition states that the Nisio semigroup $(\SS(t))_{t\geq 0}$ is differentiable at zero if the family $f$ is bounded.
\begin{proposition}\label{diffzero}
 Assume that $f$ is bounded. Then, for all $u_0\in \R^d$,
 \[
  \bigg\|\frac{\SS(h)u_0-u_0}{h}-\QQ u_0\bigg\|_\infty\to 0,\quad h\searrow 0.
 \]
\end{proposition}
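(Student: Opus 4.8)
The plan is to establish the limit componentwise by squeezing the difference quotient between a matching upper and lower estimate, both of which converge to $\QQ u_0$. Since $\R^d$ is finite-dimensional, convergence in each coordinate is equivalent to convergence in $\|\cdot\|_\infty$, so it suffices to show, for every fixed $u_0\in\R^d$ and every $i\in\{1,\ldots,d\}$, that
\[
\limsup_{h\searrow 0}\bigg(\frac{\SS(h)u_0-u_0}{h}-\QQ u_0\bigg)_i\leq 0\qquad\text{and}\qquad \liminf_{h\searrow 0}\bigg(\frac{\SS(h)u_0-u_0}{h}-\QQ u_0\bigg)_i\geq 0.
\]

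For the upper bound I would invoke Lemma \ref{intsemi}, which gives $\SS(h)u_0-u_0\leq \int_0^h \Si(s)\QQ u_0\,{\rm d}s$. Subtracting $\QQ u_0=\tfrac1h\int_0^h\QQ u_0\,{\rm d}s$ and dividing by $h$ yields
\[
\frac{\SS(h)u_0-u_0}{h}-\QQ u_0\leq \frac1h\int_0^h\big(\Si(s)\QQ u_0-\QQ u_0\big)\,{\rm d}s.
\]
Because $\Si$ is the Nisio semigroup associated with the (trivially bounded) family $f\equiv 0$, Lemma \ref{unicont} applies to $\Si$ and gives $\|\Si(s)\QQ u_0-\QQ u_0\|_\infty\leq s\sup_{q\in\PP}\|q\QQ u_0\|_\infty$, the supremum being finite since the boundedness of $f$ forces $\PP$ to be bounded by Remark \ref{uniclass1}. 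Hence the right-hand side is bounded in $\|\cdot\|_\infty$ by $\tfrac{h}{2}\sup_{q\in\PP}\|q\QQ u_0\|_\infty$, which tends to $0$; in fact this delivers the upper estimate at rate $O(h)$.

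For the lower bound I would use the defining domination property of the semigroup envelope, $\SS(h)u_0\geq S_q(h)u_0$ for every $q\in\PP$. By the Duhamel formula \eqref{duhamel},
\[
\frac{\SS(h)u_0-u_0}{h}\geq \frac1h\int_0^h e^{sq}\big(qu_0+f_q\big)\,{\rm d}s,
\]
and as $h\searrow 0$ the right-hand side converges coordinatewise to $qu_0+f_q$ by continuity of the integrand at $s=0$ together with $e^{0\cdot q}=I$. Fixing a coordinate $i$ and then taking the supremum over $q\in\PP$ gives $\liminf_{h\searrow 0}\big(\tfrac{\SS(h)u_0-u_0}{h}\big)_i\geq \sup_{q\in\PP}(qu_0+f_q)_i=(\QQ u_0)_i$, which is the desired lower estimate.

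Combining the two estimates in each coordinate forces $\lim_{h\searrow 0}\big(\tfrac{\SS(h)u_0-u_0}{h}-\QQ u_0\big)_i=0$ for every $i$, hence convergence in $\|\cdot\|_\infty$. The only genuinely delicate ingredient is the upper estimate, which rests entirely on Lemma \ref{intsemi}: its nontrivial content is the comparison of the fully nonlinear increment $\SS(h)u_0-u_0$ with an integral of the \emph{sublinear} semigroup $\Si$ applied to $\QQ u_0$. Once that comparison is granted, the averaging argument and the right-continuity of $s\mapsto \Si(s)\QQ u_0$ at the origin are routine, and the lower bound is immediate from domination, so I expect no further obstruction.
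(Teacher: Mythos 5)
Your proof is correct and follows essentially the same route as the paper: the upper estimate comes from Lemma \ref{intsemi} together with the continuity of $\Si$ at the origin (Lemma \ref{unicont}), and the lower estimate from the domination $\SS(h)u_0\geq S_q(h)u_0$ combined with the Duhamel formula \eqref{duhamel}. The only cosmetic differences are that you argue coordinatewise via $\liminf$/$\limsup$ with a pointwise-in-$q$ lower bound (the paper instead runs a uniform-in-$q$ $\ep$-estimate using the boundedness of $\PP$), and that your upper bound yields an explicit $O(h)$ rate.
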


\begin{proof}
 Since $f$ is bounded, it follows that $\PP$ is bounded (see Remark \ref{uniclass1}). Let $\ep>0$ and $u_0\in \R^d$. Using Lemma \ref{unicont}, the boundedness of $\PP$ and \eqref{bla1}, there exists some $h_0>0$ such that, for all $0<h\leq h_0$,
 \begin{align*}
  \big\|e^{hq} \big(q u_0+f_q\big)-\big(q u_0+f_q\big)\big\|_\infty&\leq \|e^{hq}-I_d\|\cdot \|q u_0+f_q\|_\infty\\
  &\leq \big(e^{\|q\|h}-1\big)\|q u_0+f_q\|_\infty\leq \ep, 
 \end{align*}
 for all $q\in \PP$, and
 \[
  \Si(h)\QQ u_0-\QQ u_0\leq\ep.
 \]
 Let $0<h\leq h_0$. Then,
 \[
  \SS(h)u_0-u_0 \geq S_q(h)u_0-u_0 =\int_0^h e^{tq} \big(q u_0+f_q\big)\, {\rm d}s \geq \big(q u_0+f_q-\ep \big)h
 \]
 for all $q\in \PP$. Dividing by $h$ and taking the supremum over all $q\in \PP$, it follows that
 \begin{equation}\label{gen12134}
  \frac{\SS(h)u_0-u_0}{h}\geq \QQ u_0-\ep.
 \end{equation}
 Moreover, by Lemma \ref{intsemi},
 \[
  \SS(h)u_0-u_0-h\QQ u_0\leq \int_0^h \Si(s)\QQ u_0\, {\rm d}s-h\QQ u_0=\int_0^h \big(\Si(s)\QQ u_0-\QQ u_0\big)\, {\rm d}s\leq h\ep.
 \]
 Dividing again by $h>0$ yields
 \[
  \frac{\SS(h)u_0-u_0}{h}-\QQ u_0\leq  \ep,
 \]
 which, together with \eqref{gen12134}, implies that
 \[
  \bigg\|\frac{\SS(h)u_0-u_0}{h}-\QQ u_0\bigg\|_\infty\leq \ep.
 \]
\end{proof}


\begin{corollary}\label{cormain}
 Let $f$ be bounded, $u_0\in \R^d$ and $u(t):=\SS(t)u_0$ for $t\geq 0$. Then, $u\in C^1\big([0,\infty);\R^d\big)$ is the unique classical solution to the ODE
 \[
   u'(t)=\QQ u(t),\quad t\geq 0
  \]
 with $u(0)=u_0$.
\end{corollary}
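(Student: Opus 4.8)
The plan is to establish that $u(t):=\SS(t)u_0$ solves the ODE and then argue uniqueness separately, using as the essential engine Proposition \ref{diffzero} (differentiability at zero) together with the semigroup property \eqref{111} from Proposition \ref{semchains}. Since $f$ is bounded, all the groundwork (Lipschitz continuity in $t$ from Lemma \ref{unicont}, the semigroup property, and differentiability at $t=0$) is already in place.

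\emph{Existence and $C^1$-regularity.} First I would upgrade differentiability at zero to differentiability at every $t\geq 0$ by exploiting the semigroup property. For $h>0$ one writes
\[
 \frac{\SS(t+h)u_0-\SS(t)u_0}{h}=\frac{\SS(h)\big(\SS(t)u_0\big)-\SS(t)u_0}{h},
\]
and by Proposition \ref{diffzero} applied at the point $v_0:=\SS(t)u_0$ this converges to $\QQ\big(\SS(t)u_0\big)=\QQ u(t)$ as $h\searrow 0$. This gives the right derivative. For the left derivative at $t>0$ I would consider
\[
 \frac{\SS(t)u_0-\SS(t-h)u_0}{h}=\SS(t-h)\bigg(\frac{\SS(h)u_0-u_0}{h}\bigg)
\]
and use that $\SS(t-h)$ is Lipschitz (being a kernel, it is $\|\cdot\|_\infty$-nonexpansive by monotonicity and constant-preservation), that $\frac{\SS(h)u_0-u_0}{h}\to\QQ u_0$, and that $\SS(t-h)\QQ u_0\to\SS(t)\QQ u_0$ by continuity in $t$ (Lemma \ref{unicont}); combined with $\SS(t)\QQ u_0=\QQ\SS(t)u_0$ one concludes the left derivative also equals $\QQ u(t)$. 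Since $t\mapsto\QQ u(t)$ is continuous ($u$ is continuous by Lemma \ref{unicont} and $\QQ$ is continuous, being convex and finite on $\R^d$), $u\in C^1\big([0,\infty);\R^d\big)$ and $u'(t)=\QQ u(t)$.

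\emph{Uniqueness.} Because $f$ is bounded, Remark \ref{uniclass1} shows $\PP$ is bounded, hence $\QQ$ is globally Lipschitz (with constant $L:=\sup_{q\in\PP}\|q\|$). Then uniqueness is the standard Picard--Lindel\"of conclusion: if $u$ and $w$ are two $C^1$ solutions with $u(0)=w(0)=u_0$, then $\|u(t)-w(t)\|_\infty\leq L\int_0^t\|u(s)-w(s)\|_\infty\,{\rm d}s$, so Gronwall's inequality forces $u\equiv w$.

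\emph{Main obstacle.} The only genuinely delicate point is the left-hand derivative, where one must pass the limit through the operator $\SS(t-h)$. The clean way to do this is to use that each $\SS(r)$ is nonexpansive in $\|\cdot\|_\infty$ (a consequence of being a monotone, constant-preserving kernel, since for kernels $\|\SS(r)x-\SS(r)y\|_\infty\leq\|x-y\|_\infty$), which lets one estimate
\[
 \bigg\|\SS(t-h)\Big(\tfrac{\SS(h)u_0-u_0}{h}\Big)-\SS(t)\QQ u_0\bigg\|_\infty\leq \Big\|\tfrac{\SS(h)u_0-u_0}{h}-\QQ u_0\Big\|_\infty+\big\|\SS(t-h)\QQ u_0-\SS(t)\QQ u_0\big\|_\infty,
\]
and both terms vanish as $h\searrow 0$ by Proposition \ref{diffzero} and Lemma \ref{unicont}. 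Once this nonexpansivity is noted, the remaining steps are routine.
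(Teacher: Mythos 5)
Your right-derivative computation and your uniqueness argument coincide with the paper's, but your left-derivative step contains a genuine error: the identity
\[
 \frac{\SS(t)u_0-\SS(t-h)u_0}{h}=\SS(t-h)\bigg(\frac{\SS(h)u_0-u_0}{h}\bigg)
\]
is false in this setting. By the semigroup property the numerator on the left equals $\SS(t-h)\big(\SS(h)u_0\big)-\SS(t-h)u_0$, and pulling the difference and the factor $1/h$ inside $\SS(t-h)$ requires linearity; here $\SS(t-h)$ is only a convex kernel, and a nonlinear map $T$ satisfies neither $T(x)-T(y)=T(x-y)$ nor $\tfrac1h T(x-y)=T\big(\tfrac{x-y}{h}\big)$ in general. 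Your nonexpansivity estimate does not repair this, because the quantity it controls, $\SS(t-h)\big(\tfrac{\SS(h)u_0-u_0}{h}\big)$, is simply not the difference quotient you need to analyze. Moreover, the relation $\SS(t)\QQ u_0=\QQ\SS(t)u_0$ that you invoke to conclude is established nowhere and is generally false for nonlinear semigroups; the paper only ever identifies the \emph{right} derivative of $t\mapsto\SS(t)u_0$ as $\QQ\SS(t)u_0$, never as $\SS(t)\QQ u_0$.

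The paper sidesteps the left derivative entirely. From your (correct) first step, $u$ is continuous (Lemma \ref{unicont}) and right differentiable everywhere with right derivative $t\mapsto\QQ\SS(t)u_0$, and this right derivative is continuous because $\QQ$ is convex, hence continuous, and $t\mapsto\SS(t)u_0$ is continuous. A standard real-analysis lemma then states that a continuous function on $[0,\infty)$ with a continuous right derivative is continuously differentiable, with derivative equal to the right derivative. This gives $u\in C^1\big([0,\infty);\R^d\big)$ and $u'(t)=\QQ u(t)$ without ever touching the left difference quotient; your uniqueness paragraph (global Lipschitz continuity of $\QQ$ from the boundedness of $\PP$, then Picard--Lindel\"of/Gronwall) then finishes the proof exactly as in the paper. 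To repair your argument, replace the left-derivative computation by an appeal to this one-sided-derivative lemma.
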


\begin{proof}
 Let $u_0\in \R^d$ and $t\geq 0$. Then, by Proposition \ref{diffzero},
 \[
  \lim_{h\searrow 0}\frac{\SS(t+h)u_0-\SS(t)u_0}{h}\lim_{h\searrow 0}\frac{\SS(h)\SS(t)u_0-\SS(t)u_0}{h}=\QQ\SS(t)u_0.
 \]
 This shows that the map $u\colon [0,\infty)\to \R^d,\; t\mapsto \SS(t)u_0$ is continuous (see Lemma \ref{unicont}) and right differentiable with continuous right derivative $$[0,\infty)\to \R^d,\quad t\mapsto \QQ\SS(t)u_0,$$ where we used that the fact that $\QQ\colon \R^d\to \R^d$ is convex and thus continuous. Therefore, $u$ is continuously differentiable with $u'(t)=\QQ u(t)$, for all $t\geq 0$, and $u(0)=u_0$. The Picard-Lindel\"of Theorem together with the local Lipschitz continuity of the convex map $\QQ\colon \R^d\to \R^d$ implies the uniqueness of $u$.
\end{proof}

\begin{corollary}\label{unico}
 Let $f$ be bounded. Then, there exists some constant $L>0$ such that
 \[
  \|\SS(t)u_0-u_0\|_\infty\leq Lt\|u_0\|_\infty
 \]
for all $t\geq 0$ and $u_0\in \R^d$.
\end{corollary}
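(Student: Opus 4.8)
The plan is to exploit the sign structure of the penalization $f$ and to estimate $\SS(t)u_0-u_0$ separately from above and from below. A first instinct would be to invoke Lemma \ref{unicont}, which gives $\|\SS(t)u_0-u_0\|_\infty\le t\sup_{q\in\PP}\|qu_0+f_q\|_\infty$; but this bound is useless here, since its right-hand side need not be controlled by $\|u_0\|_\infty$ — already for $u_0=0$ one has $\sup_{q\in\PP}\|q\cdot 0+f_q\|_\infty=\sup_{q\in\PP}\|f_q\|_\infty$, which is generally positive, whereas $\SS(t)0-0=0$. Recognizing this asymmetry is the main point: the upper estimate should use $f_q\le 0$, while the lower estimate should use the distinguished matrix $q_0\in\PP$ with $f_{q_0}=0$. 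Throughout I set $L:=\sup_{q\in\PP}\|q\|$, which is finite by Remark \ref{uniclass1}, since boundedness of $f$ forces boundedness of $\PP$.

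For the upper bound I would use Lemma \ref{intsemi}: $\SS(t)u_0-u_0\le \int_0^t \Si(s)\QQ u_0\,{\rm d}s$, where $\big(\Si(s)\big)_{s\ge 0}$ is the Nisio semigroup of the sublinear operator $\mathfrak{q}$ and hence a family of convex kernels. Since $f_q\le 0$ for every $q\in\PP$, the defining formula $\QQ u_0=\sup_{q\in\PP}(qu_0+f_q)$ yields, componentwise, $\QQ u_0\le \sup_{q\in\PP}qu_0\le L\|u_0\|_\infty$, where I read $L\|u_0\|_\infty$ as the associated constant vector. As $\Si(s)$ is monotone and preserves constants, it follows that $\Si(s)\QQ u_0\le L\|u_0\|_\infty$ for all $s\ge 0$, and integrating over $[0,t]$ gives $\SS(t)u_0-u_0\le Lt\|u_0\|_\infty$.

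For the lower bound I would use that $\SS$ is the semigroup envelope of $(\PP,f)$ (Proposition \ref{semchains} and Definition \ref{nisiosg}), so $\SS(t)u_0\ge S_{q_0}(t)u_0$. By the variation-of-constants formula \eqref{duhamel} together with $f_{q_0}=0$, one has $S_{q_0}(t)u_0-u_0=\int_0^t e^{sq_0}q_0u_0\,{\rm d}s$; since each $e^{sq_0}$ is a stochastic matrix, so that $\|e^{sq_0}\|=1$, the integrand has $\|\cdot\|_\infty$-norm at most $\|q_0\|\,\|u_0\|_\infty\le L\|u_0\|_\infty$, whence $\SS(t)u_0-u_0\ge S_{q_0}(t)u_0-u_0\ge -Lt\|u_0\|_\infty$ componentwise. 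Combining the two one-sided estimates gives $-Lt\|u_0\|_\infty\le \SS(t)u_0-u_0\le Lt\|u_0\|_\infty$, that is $\|\SS(t)u_0-u_0\|_\infty\le Lt\|u_0\|_\infty$ with $L=\sup_{q\in\PP}\|q\|$. The only genuine obstacle is the asymmetry described above; once the upper/lower split is made, each half is routine given Lemma \ref{intsemi} and the envelope property.
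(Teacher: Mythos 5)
Your proof is correct, but it follows a genuinely different route from the paper's. The paper deduces the corollary in one line from Corollary \ref{cormain}: since $u(t)=\SS(t)u_0$ is the classical solution of $u'=\QQ u(t)$, one writes $\SS(t)u_0-u_0=\int_0^t\QQ\SS(s)u_0\,{\rm d}s$, bounds $\|\QQ v\|_\infty\leq L\|v\|_\infty$ (Lipschitz continuity of $\QQ$ with constant $L=\sup_{q\in\PP}\|q\|$ together with $\QQ 0=0$), and uses the kernel contraction $\|\SS(s)u_0\|_\infty\leq\|u_0\|_\infty$; the two-sided estimate is thus absorbed into a single norm inequality. You instead avoid Corollary \ref{cormain} (and hence the whole differentiability machinery of Proposition \ref{diffzero}) and argue with two one-sided bounds: the upper bound from Lemma \ref{intsemi} combined with $f_q\leq 0$ and the kernel properties of $\Si(s)$, and the lower bound from the envelope property $\SS(t)u_0\geq S_{q_0}(t)u_0$ together with \eqref{duhamel} and $f_{q_0}=0$. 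Both arguments produce the same constant $L=\sup_{q\in\PP}\|q\|$, finite by Remark \ref{uniclass1}. What the paper's route buys is brevity, since Corollary \ref{cormain} is already available at that point in the text; what your route buys is logical economy: it uses only Lemma \ref{intsemi} and the elementary structure of the Nisio construction, so it would remain valid at an earlier stage of the development, and it makes transparent why the constant does not involve $\sup_{q\in\PP}\|f_q\|_\infty$. Your opening observation that Lemma \ref{unicont} cannot suffice (its bound does not vanish at $u_0=0$ unless $f\equiv 0$) is also correct and is precisely the reason the paper, too, cannot simply quote that lemma here.
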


\begin{proof}
 Since $f$ is bounded, we have that $\PP$ is bounded and therefore $\QQ$ is Lipschitz continuous with Lipschitz constant $L:=\sup_{q\in \PP}\|q\|$. For all $u_0\in \R^d$ we thus obtain that
 \[
  \|\SS(t)u_0-u_0\|_\infty\leq \int_0^t\|\QQ\SS(s)u_0\|_\infty\, {\rm d}s\leq \int_0^tL\|\SS(s)u_0\|_\infty\, {\rm d}s\leq Lt \|u_0\|_\infty.
 \]
\end{proof}

Finally, in order to end the proof of Theorem \ref{main}, we have to extend Corollary \ref{cormain} to the unbounded case. We start with the following remark, which is the key observation in order to finish the proof of Theorem \ref{main}.
\begin{remark}\label{uniclass}
Let $\PP^*:=\{q\in \R^{d\times d}\, |\, \QQ^*(q)<\infty\}$ and $f^*_q:=-\QQ^*(q)$ for all $q\in \PP^*$, where $\QQ^*$ is the conjugate function of $\QQ$ (cf. Remark \ref{conjfunct}). For all $M\geq 0$, let $(\PP_M^*, f_M^*)$ and $\QQ_M^*$ be as in Remark \ref{uniclass1} with $\PP$ being replaced by $\PP^*$. Moreover, let $\big(\SS^*_M(t)\big)_{t\geq 0}$ be the Nisio semigroup w.r.t. $(\PP_M^*,f_M^*)$ for $M\geq 0$. As
\[
 \bigcup_{M\geq 0}\PP_M^*=\PP^*,
\]
it follows that $\SS_M^*(t)\nearrow \SS^*(t)$ as $M\to \infty$ for all $t\geq 0$, where $(\SS^*(t))_{t\geq 0}$ is the Nisio semigroup w.r.t.~$(\PP^*,f^*)$. Let $R>0$ be fixed. Then, there exists some $M_0\geq 0$ such that $\QQ u=\QQ_{M_0}^*u$ for all $u\in \R^d$ with $\|u\|_\infty\leq R$, by choice of $\PP^*$ and $f^*$. Let $u_0\in \R^d$ with $\|u_0\|_\infty\leq R$. Then, it follows that $\|\SS_M^*(t)u_0\|_\infty\leq R$ for all $t\geq 0$ and $M\geq 0$, which implies that $\SS_M^*(t)u_0=\SS_{M_0}^*(t)u_0$ for all $t\geq 0$ and $M\geq M_0$ by the uniqueness obtained from the Picard-Lindel\"of Theorem. In particular, $\SS^*(t)u_0=\SS_{M_0}^*(t)u_0$ for all $t\geq 0$, which shows that the nonlinear ODE \eqref{odemain} has a unique classical solution $u^*\in C^1\big([0,\infty);\R^d\big)$ with $u^*(0)=u_0$. This solution is given by $u^*(t)=\SS^*(t)u_0$ for all $t\geq 0$. By Corollary \ref{unico}, we thus get that $\SS^*(t)\to I$ as $t\searrow 0$ uniformly on compact sets. 
\end{remark}

We are now able to finish the proof of Theorem \ref{main}.

\begin{proposition}\label{endmain}
Let $u_0\in \R^d$. Then, $u\colon [0,\infty)\to \R^d,\; t\mapsto \SS(t)u_0$ is the unique classical solution $u\in C^1\big([0,\infty);\R^d\big)$ to the initial value problem
 \begin{align*}
  u'(t)&=\QQ u(t),\quad t\geq 0,\\
  u(0)&=u_0.
 \end{align*}
 Moreover, the Nisio semigroup $(\SS(t))_{t\geq 0}$ is uniformly continuous (see Definition \ref{semigroup}).
\end{proposition}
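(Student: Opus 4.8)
The plan is to show that the Nisio semigroup $\SS$ of the given pair $(\PP,f)$ coincides with the Nisio semigroup $\SS^*$ of the canonical dual pair $(\PP^*,f^*)$ attached to the conjugate $\QQ^*$ (cf. Remark \ref{conjfunct} and Remark \ref{uniclass}). Once this identity is in place, the assertion follows at once, since Remark \ref{uniclass} already shows that $u^*(t)=\SS^*(t)u_0$ is the unique classical solution of the initial value problem and that $\SS^*(t)\to I$ uniformly on compact sets as $t\searrow 0$. So the whole proof reduces to the two-sided comparison $\SS=\SS^*$.

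First I would record the sandwiching inequalities $\SS_M(t)u_0\le \SS(t)u_0\le \SS^*(t)u_0$. The left-hand inequality is immediate from $\PP_M\subset \PP$ and the monotonicity of the envelope in its generating family (Proposition \ref{semchains}). For the right-hand inequality, note that for every $q\in \PP$ one has $q\in \PP^*$ and $f_q\le f^*_q=-\QQ^*(q)$, because $qu+f_q\le \QQ u$ for all $u$ forces $\QQ^*(q)\le -f_q$. Since $e^{sq}$ is a positive kernel, this yields $S_q(t)u_0\le S^*_q(t)u_0$ for all $t\ge 0$, so $\SS^*$ dominates every affine semigroup $S_q$, $q\in \PP$; minimality of the envelope $\SS$ then gives $\SS\le \SS^*$.

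Next I would use these bounds to compute the right derivative of $\SS$ at $t=0$ for an arbitrary starting vector $v\in \R^d$. From the lower bound together with Proposition \ref{diffzero} applied to the bounded pair $(\PP_M,f_M)$ we obtain $\liminf_{h\searrow 0}\frac{\SS(h)v-v}{h}\ge \QQ_M v$ componentwise, and letting $M\to\infty$ (using $\QQ_M\nearrow \QQ$) the liminf is $\ge \QQ v$. From the upper bound together with the differentiability of $\SS^*$ at $0$ (Remark \ref{uniclass}) we obtain $\limsup_{h\searrow 0}\frac{\SS(h)v-v}{h}\le \QQ v$. Hence $\lim_{h\searrow 0}\frac{\SS(h)v-v}{h}=\QQ v$ for every $v$. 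Invoking the semigroup property exactly as in the proof of Corollary \ref{cormain}, the map $t\mapsto \SS(t)u_0$ then has right derivative $\QQ\SS(t)u_0$ at every $t\ge 0$; since $\SS(t)u_0$ stays in the ball of radius $\|u_0\|_\infty$ (because $\SS(t)$ is a kernel) and $\QQ$ is continuous, this right derivative is bounded and continuous in $t$, so $t\mapsto \SS(t)u_0$ is continuously differentiable with $\frac{\rm d}{{\rm d}t}\SS(t)u_0=\QQ\SS(t)u_0$ and $\SS(0)u_0=u_0$. Thus $t\mapsto \SS(t)u_0$ is a classical solution of \eqref{odemain}, and by the uniqueness part of Remark \ref{uniclass} it must agree with $u^*(t)=\SS^*(t)u_0$, proving $\SS=\SS^*$; uniform continuity of $\SS$ is then inherited from $\SS^*$ via Corollary \ref{unico}.

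The main obstacle is the upper estimate for the difference quotient. In the bounded case (Lemma \ref{intsemi}) this came from dominating $\SS$ by the sublinear Nisio semigroup $\Si$ of $\mathfrak q$, but for unbounded $\PP$ the operator $\mathfrak q\, u=\sup_{q\in \PP}qu$ need not be finite, so $\Si$ is unavailable. Replacing $\Si$ by the globally defined $\SS^*$, whose differentiability at $0$ is guaranteed by the conjugate-based localization of Remark \ref{uniclass} (where $\QQ$ coincides on each ball with the Lipschitz truncation $\QQ^*_{M_0}$ and solutions stay inside that ball), is the key device that makes the unbounded case go through; everything else is a routine combination of the already established bounded-case results.
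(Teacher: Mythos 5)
Your strategy is sound and reaches the same identification $\SS=\SS^*$ as the paper, but by a genuinely different route. The paper never differentiates $\SS$ itself: it fixes $R=\|u_0\|_\infty$, uses Dini's lemma to obtain $\|\QQ v-\QQ_{M_0}v\|_\infty\le\ep$ on the ball of radius $R$, and then applies Gronwall's lemma to the integral equations satisfied by $\SS_M(\cdot)u_0$ (via Corollary \ref{cormain}) and by $u^*=\SS^*(\cdot)u_0$, yielding $\|\SS_M(t)u_0-u^*(t)\|_\infty\le \ep t e^{Lt}$ for $M\ge M_0$; letting $M\to\infty$ and using $\SS_M(t)u_0\nearrow\SS(t)u_0$ gives $\SS(t)u_0=u^*(t)$. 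You replace Gronwall/Dini by the two-sided comparison $\SS_M\le\SS\le\SS^*$ --- whose right half, resting on $q\in\PP^*$ and $f_q\le f_q^*=-\QQ^*(q)$ for every $q\in\PP$, positivity of $e^{sq}$, and minimality of the envelope from Proposition \ref{semchains}, is a correct observation the paper does not make explicit --- and then squeeze the difference quotient at $t=0$ to get $\lim_{h\searrow 0}h^{-1}\big(\SS(h)v-v\big)=\QQ v$ for every $v\in\R^d$. This is more elementary and has the side benefit of establishing differentiability of $\SS$ at zero directly (which is what statement $(iv)$ of Theorem \ref{main} ultimately requires), whereas the paper obtains it only after the identification with $\SS^*$; the paper's route, in turn, yields a quantitative rate for the convergence $\SS_M(t)u_0\to\SS(t)u_0$.

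One step as written needs repair. You justify continuity of the right derivative $t\mapsto\QQ\SS(t)u_0$ by saying that $\SS(t)u_0$ stays in the ball of radius $\|u_0\|_\infty$ and $\QQ$ is continuous; that only gives boundedness. Continuity of $t\mapsto\QQ\SS(t)u_0$ presupposes continuity of $t\mapsto\SS(t)u_0$, and the results you lean on (Corollary \ref{cormain}, whose proof cites Lemma \ref{unicont}) establish this only for bounded $f$. The point is not cosmetic: right differentiability at every $t$ with a bounded, even continuous, right derivative does not by itself imply continuity (a step function has right derivative identically zero). The gap is, however, filled with tools you already have: your squeeze gives differentiability, hence continuity, of $t\mapsto\SS(t)v$ at $t=0$ for every $v\in\R^d$; right continuity at arbitrary $t$ then follows from $\SS(t+h)u_0=\SS(h)\big(\SS(t)u_0\big)$, and left continuity from
\[
\|\SS(t)u_0-\SS(t-h)u_0\|_\infty=\big\|\SS(t-h)\SS(h)u_0-\SS(t-h)u_0\big\|_\infty\le\|\SS(h)u_0-u_0\|_\infty,
\]
using that $\SS(t-h)$ is a nonexpansive kernel, exactly as in the last step of the proof of Lemma \ref{unicont}. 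With continuity of $t\mapsto\SS(t)u_0$ in hand, your appeal to the criterion ``continuous plus continuous right derivative implies $C^1$'' and to the uniqueness from Remark \ref{uniclass} is correct, and the proof closes.
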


\begin{proof}
 By Remark \ref{uniclass}, the initial value problem
  \begin{align*}
  u'(t)&=\QQ u(t),\quad t\geq 0,\\
  u(0)&=u_0.
 \end{align*}
 has a unique classical solution $u^*\in C^1\big([0,\infty);\R^d\big)$, which is given by
 \[
 u^*(t):=\SS^*(t)u_0\quad \text{for all }t\geq 0.
 \]
 We show that $u^*(t)=\SS(t)u_0$, for all $t\geq 0$. Let $R:=\|u_0\|_\infty$. For all $M\geq 0$, let $(\PP_M,f_M)$, $\QQ_M$ and $\SS_M=\big(\SS_M(t)\big)_{t\geq 0}$ be as in Remark \ref{uniclass1}. Let $\ep>0$. Since $\QQ\colon \R^d\to \R^d$ is convex, it is locally Lipschitz. Hence, by Dini's lemma, there exists some $M_0\geq 0$ such that
\[
 \|\QQ v_0-\QQ_{M_0}v_0\|_\infty\leq \ep
\]
for all $v_0\in \R^d$ with $\|v\|_\infty\leq R$. Further, there exists some constant $L>0$ such that
\[
 \|\QQ v_1-\QQ v_2\|_\infty\leq L\|v_1-v_2\|_\infty
\]
for all $v_1,v_2\in \R^d$ with $\|v_1\|_\infty\leq R$ and $\|v_2\|_\infty\leq R$. Since $\|u^*(t)\|_\infty\leq R$ and $\|\SS_M(t)u_0\|_\infty\leq R$ for all $M\geq 0$ and $t\geq 0$, we obtain that
\begin{align*}
 \|\SS_M(t)u_0-u^*(t)\|_\infty&=\bigg\|\int_0^t \QQ_M\SS_M(s)u_0-\QQ u^*(s)\, {\rm d}s\bigg\|_\infty\\
 &\leq \int_0^t \|\QQ_M\SS_M(s)u_0-\QQ u^*(s)\|_\infty\, {\rm d}s\\
 &\leq  \int_0^t \big(\|\QQ\SS_M(s)u_0-\QQ u^*(s)\|_\infty+\ep\big)\, {\rm d}s\\
 &\leq  \int_0^t L\|\SS_M(s)u_0- u^*(s)\|_\infty+\ep\, {\rm d}s
\end{align*}
for all $t\geq 0$ and $M\geq M_0$. By Gronwall's lemma, we thus get that
\[
 \|\SS_M(t)u_0-u^*(t)\|_\infty\leq \ep t e^{Lt}
\]
for all $t\geq 0$ and $M\geq M_0$, showing that $\SS_M(t)u_0\to u^*(t)$ as $M\to \infty$ for all $t\geq 0$. However, since $\SS_M(t)u_0\nearrow \SS(t)u_0$ as $M\to \infty$ for all $t\geq 0$, we obtain that $u^*(t)=\SS(t)u_0$. This shows that $\SS(t)=\SS^*(t)$ for all $t\geq 0$, which, together with Remark \ref{uniclass}, implies that $\SS(t)=\SS^*(t)\to I$ uniformly on compact sets as $h\searrow 0$. This ends the proof of this proposition and also the proof of Theorem \ref{main}.
\end{proof}

We conclude this section with the following remark, where we derive a dual representation of the semigroup envelope.

\begin{remark}\label{rem:dualrep}
We will now derive a dual representation of the semigroup envelope by viewing the semigroup envelope as the cost functional of an optimal control problem, where, roughly speaking, ``nature'' tries to control the system into the worst possible scenario (using contols within the set $\PP$). For $q=(q^1,\ldots q^d)\in \PP^d$ and $t\geq 0$, let $S_q(t)\in \R^{d\times d}$ be given by
\begin{equation}\label{sqi}
\big(S_q(t) u_0\big)_i:=\big(S_{q^i}(t)u_0\big)_i
\end{equation}
for all $u_0\in \R^d$ and $i\in \{1,\ldots, d\}$. That is, $S_q(t)$ is the matrix whose $i$-th row is the $i$-th row of $S_{q^i}(t)$ for all $i\in \{1,\ldots, d\}$. Here, the interpretation is that, in every state $i\in \{1,\ldots, d\}$, ``nature'' is allowed to choose a different model $q\in \PP$. We now add a dynamic component, and define
\[
 Q_t:=\bigg\{ (q_k,h_k)_{k=1,\ldots, m}\in \big(\PP^d\times [0,t]\big)^m\, \bigg|\,m\in \N,\; \sum_{k=1}^m h_k=t\bigg\}.
\]
Roughly speaking, $Q_t$ corresponds to the set of all (space-time discrete) admissible controls for the control set $\PP$. For an admissible control $\theta=(q_k,h_k)_{k=1,\ldots, m}\in Q_t$ with $m\in \N$ and $u_0\in \R^d$, we then define
\[
 S_\theta u_0:=S_{q_1}(h_1)\cdots S_{q_m}(h_m)u_0,
\]
 where $S_{q_k}(h_k)$ is defined as in \eqref{sqi} for $k=1,\ldots, m$. Then, for all $u_0\in \R^d$,
\begin{equation}\label{primaldual}
 \SS(t)u_0=\sup_{\pi\in P_t} \EE_\pi u_0=\sup_{\theta\in Q_t}S_\theta u_0.
\end{equation}
In fact, by definition of $Q_t$, it follows that $S_q(t)u_0\leq \sup_{\theta\in Q_t}S_\theta u_0\leq \SS(t)u_0$ for all $q\in \PP$, $t\geq 0$ and $u_0\in \R^d$. On the other hand, one readily verifies that $\mathscr T(t)u_0:= \sup_{\theta\in Q_t}S_\theta u_0$, for $t\geq 0$ and $u_0\in \R^d$, gives rise to a semigroup $\big(\mathscr T(t)\big)_{t\geq 0}$. Since $\big(\SS(t)\big)_{t\geq 0}$ is the semigroup envelope of $(\PP,f)$, it follows that $\mathscr T(t)=\SS(t)$ for all $t\geq 0$.
\end{remark}

\section{Computation of price bounds under model uncertainty}\label{sec:numex}
In this section, we demonstrate how price bounds for European contingent claims under uncertainty can be computed numerically in certain scenarios, firstly, via the explicit primal/dual description \eqref{primaldual} of the semigroup envelope and, secondly, by solving the pricing ODE \eqref{odemain}. Throughout, we consider two $Q$-matrices $q_0\in \R^{d\times d}$ and $q\in \R^{d\times d}$ and, for $\lambda_l,\lambda_h\in \R$ with $\lambda_l\leq \lambda_h$, the interval $[\la_l,\lambda_h]$. Then, we consider the $Q$-operator $\QQ\colon \R^d\to \R^d$ given by
\[
 \QQ u_0:=q_0 u_0+\max_{\lambda\in [\lambda_l,\lambda_h]} \lambda qu_0\quad \text{for all }u_0\in \R^d.
\]
Then, $\QQ$ is sublinear and has the dual representation $\big(\{q_0+\lambda_l q,q_0+\lambda_h q\},(0,0)\big)$. Choosing the latter dual representation, we may compute $\QQ$ and $\EE_h$, for $h\geq 0$, via
\begin{equation}\label{eq:genuncert}
 \QQ u_0=\max_{\lambda=\lambda_l,\lambda_h} \lambda q_0 u_0+qu_0\quad \text{for all }u_0\in \R^d.
\end{equation}
and
\begin{equation}\label{ex:eh}
 \EE_h u_0=\max_{\lambda=\lambda_l,\lambda_h} e^{h(q_0+\lambda q)}u_0\quad \text{for all }u_0\in \R^d.
\end{equation}
In the sequel, we use \eqref{eq:genuncert} and \eqref{ex:eh} in order to compute upper bounds for prices of European contingent claims under uncertainy. Replacing the maximum by a minimum in \eqref{eq:genuncert} and \eqref{ex:eh}, we obtain lower bounds for the prices. In the examples, we consider, for suitable $\delta>0$, the rate matrix
  \begin{equation}\label{eq:mata}
   a:=\frac{1}{\delta^2}\left(\begin{array}{ccccccc}
    -1 & 1 &0& 0& 0& \cdots &0\\
    1 & -2 & 1& 0&0 & \cdots & 0\\
    0 & 1& -2& 1 & 0&\cdots & 0\\
    \vdots&\ddots & \ddots& \ddots& \ddots & \ddots &\vdots \\
   0&\cdots &0& 1&-2&1&0 \\
   0  &\cdots &0& 0& 1&-2&1\\
   0 &\cdots &0&0 & 0&1 &-1
   \end{array}\right),
  \end{equation}
 which is a discretization of the second space derivative with Neumann boundary conditions, and the rate matrix
  \begin{equation}\label{eq:matb}
   b:=\frac{1}{\delta}\left(\begin{array}{cccccc}
    -1 & 1 & 0&0& \cdots &0\\
    0 & -1 & 1& 0 & \cdots & 0\\
    \vdots& \ddots& \ddots& \ddots & \ddots &\vdots \\
   0&\cdots & 0&-1&1&0 \\
   0  &\cdots & 0& 0&-1&1\\
   0 &\cdots &0 & 0& 0 & 0
   \end{array}\right)
  \end{equation}
 as a discretization of the first space derivative. Then, the rate matrix
 \[
  \frac{\sigma^2}{2} a+\mu b,\quad \text{for }\sigma>0\text{ and }\mu\in \R,  
 \]
is a finite-difference discretization of $\frac{\sigma^2}{2}\partial_{xx}+\mu\partial_x$, which is the generator of a Brownian Motion with volatility $\sigma$ and drift $\mu$.\\

We start with an example, where we demonstrate how the semigroup envelope, and thus price bounds for Euproean contingent claims under model uncertainty, can be computed by solving the nonlinear pricing ODE \eqref{odemain}.

\begin{example}\label{ex:numode}
 In this example, we compute the semigroup envelope $\big(\SS(t)\big)_{t\geq 0}$ by solving the ODE $u'=\QQ u$ $u(0)=u_0\in \R^d$ with the explicit Euler method. The latter could be replaced by any other Runge-Kutta method. We consider the case, where, $d=101$, $\delta=\frac{1}{10}$. The state space is $S=\{i\delta\, |\, i\in \{0,\ldots, 100\}\}$, which as a discretization of the interval $[0,10]$, the maturity is $t=1$, and we choose $1000$ time steps in the explicit Euler method. We consider the following two examples.
 \begin{enumerate}
   \item[a)]  Let $\QQ$ be given by \eqref{eq:genuncert} with $q_0:=a$, $q:=b$, $\lambda_l:=-1$ and $\lambda_h:=1$, i.e. we consider the case of an uncertain drift parameter in the interval $[-1,1]$. We price a butterfly spread, which is given by
   \begin{equation}\label{eq:butterfly}
  \qquad u_0(x)=\big(L-K-|x-L|\big)^+,\quad \text{for }x=i\delta \text{ and }i\in \{1,\ldots, 100\},
   \end{equation}
   with $K=4$ and $L=5$.
   \begin{figure}
    \centering
     \includegraphics[width=14cm]{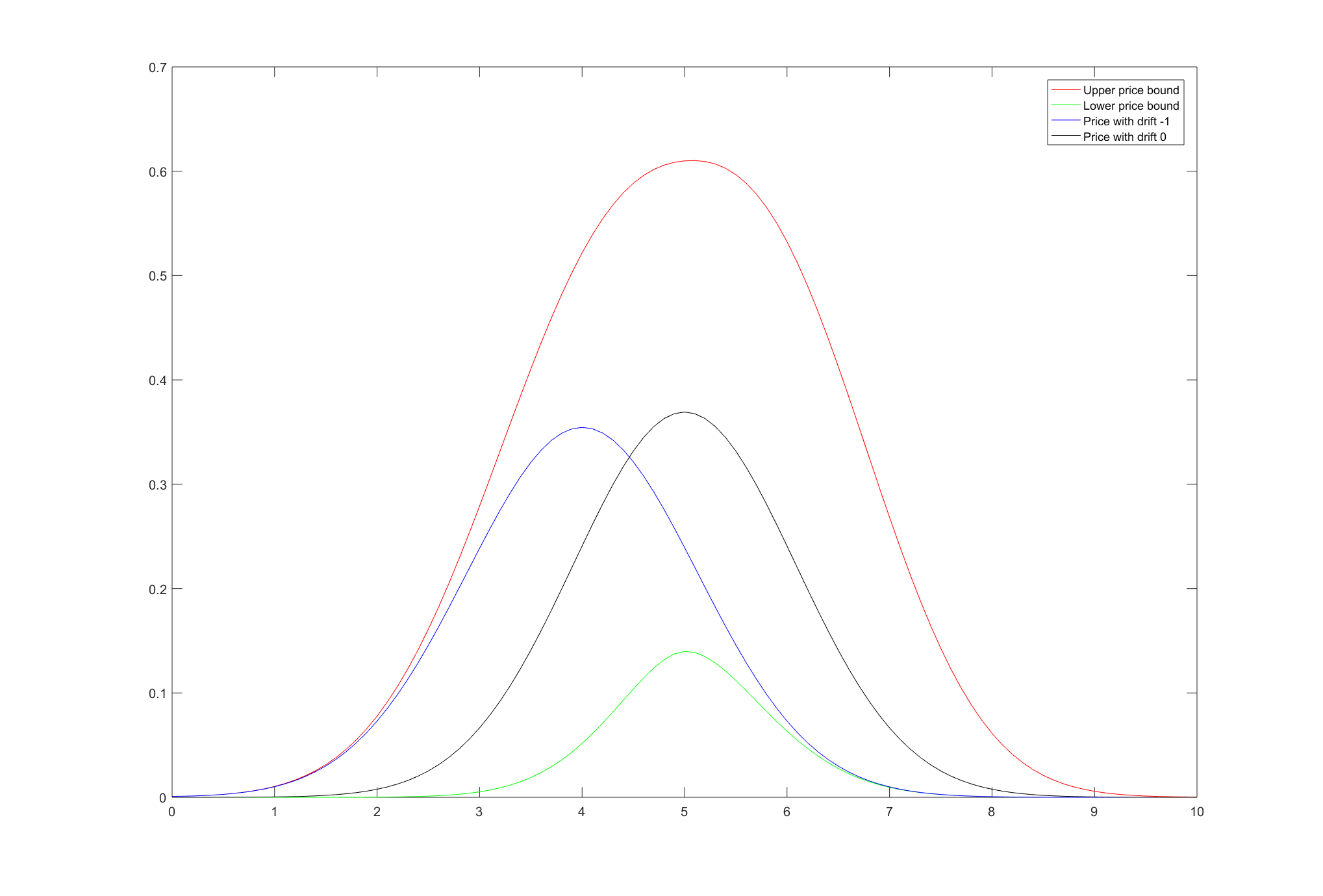}
     \caption{Upper and lower price bounds for a butterfly spread \eqref{eq:butterfly} with $K=4$ and $L=5$ under drift uncertainty depending on the current price in red and green, respectively. In blue and black, we see the value of the butterfly in the Bachelier model with drift $-1$ and $0$, respectively.}
   \end{figure}
   In Figure 1, we depict the upper and lower price bounds as well as the prices corresponding to the Bachelier model with drift $-1$ and $0$ in blue and black, respectively.
 \item[b)] Now, let $q_0:=0$, $q:=a$, $\lambda_l:=0.5$ and $\lambda_h:=1.5$ in \eqref{eq:genuncert}. That is, we consider the case of an uncertain volatility in the interval $[0.5,1.5]$. We price a bull spread
 \begin{equation}\label{eq:bullspread}
 \quad \qquad u_0(x)=\min\big\{(x-K)^+,L-K\big\},\quad \text{for }x=i\delta\text{ and }i\in \{1,\ldots, 100\},
 \end{equation}
 with $K=4$ and $L=5$.
    \begin{figure}
    \centering
     \includegraphics[width=14cm]{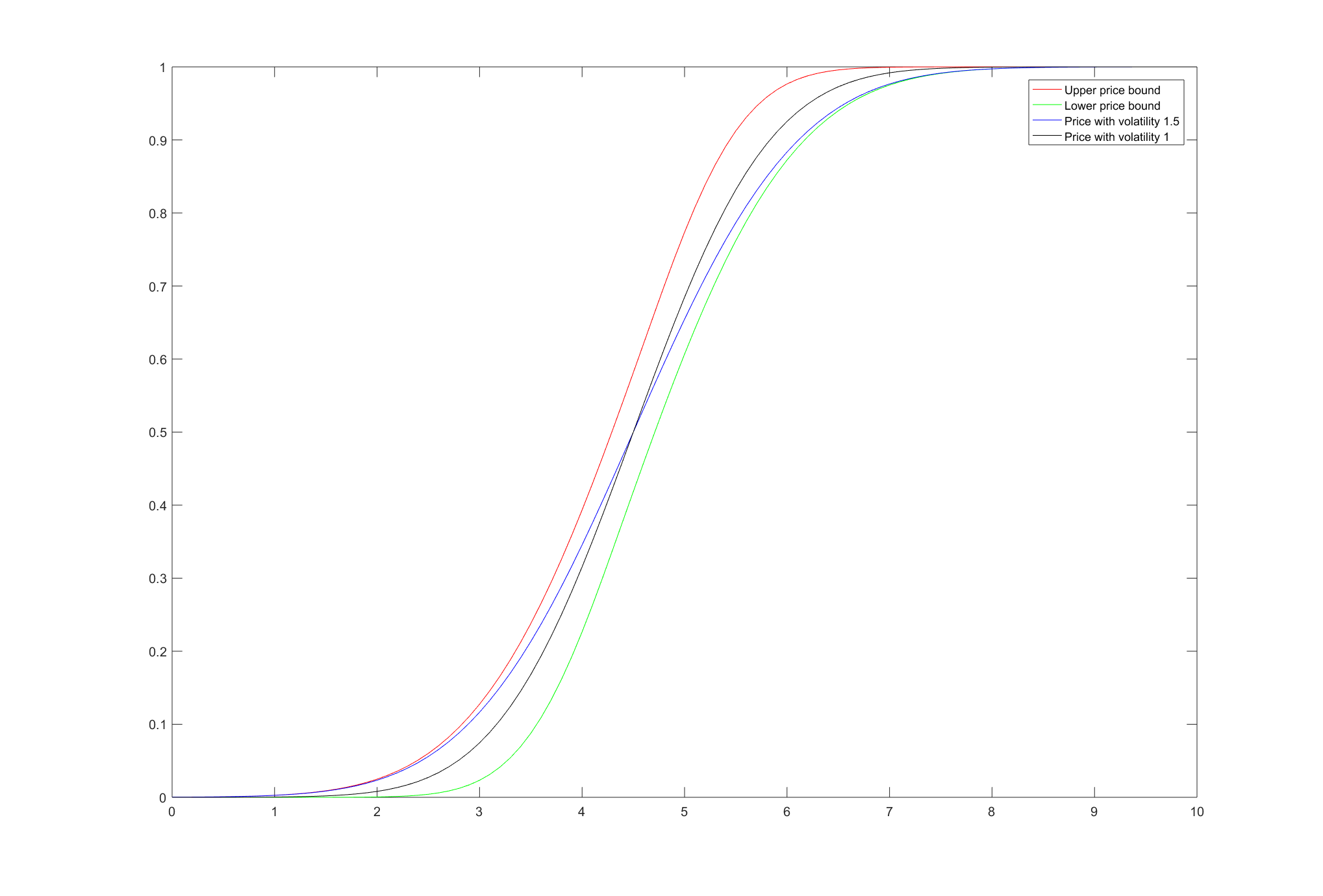}
     \caption{Upper and lower price bounds for a bull spread \eqref{eq:bullspread} with $K=4$ and $L=5$ under volatility uncertainty depending on the current price in red and green, respectively. In black and blue, we see the value of the butterfly in the Bachelier model with drift $1$ and $1.5$, respectively.}
   \end{figure}
   In Figure 1, we see the upper and lower price bounds as well as the prices corresponding to the Bachelier model with volatility $1$ and $1.5$ in black and blue, respectively. 
 \end{enumerate}
\end{example}

The following example illustrates how the primal/dual representation of the semigroup envelope can be used to compute bounds for prices of European contingent claims under model uncertainty.

\begin{example}\label{ex:numdualrep}
For a fixed maturity $t\geq 0$, we consider the partitions
$$\pi_n:=\{k2^{-n}t\, |\, k=0,\ldots, 2^n\},\quad\text{for }n\in \N_0,$$
of the time interval $[0,t]$. We are then able to approximate the upper bound for prices of European contingent claims under uncertainy with maturity $t=1$ by computing, for $n\in \N_0$ sufficiently large,
\begin{equation}\label{eq:compmeth1}
 \underbrace{\EE_{2^{-n}t}\cdots \EE_{2^{-n}t}}_{2^n-\text{times}}u
\end{equation}
with $\EE_h$ given by \eqref{ex:eh} for $h\geq 0$. The fundamental system $e^{h(q_0+\lambda q)}$, for $\lambda=\lambda_h,\lambda_l$, appearing in \eqref{ex:eh} can either be computed via the Jordan decomposition of $q_0+\lambda q$, by the approximation
\begin{equation}\label{eq:approxexp}
 \big(I+\tfrac{h}{k}(q_0+\lambda q)\big)^k
\end{equation}
with $k\in \N_0$ sufficiently large or by numerically solving the matrix-valued ODE $$U'=(q_0+\lambda q)U\quad \text{with}\quad U(0)=I,$$
where $I=I_d$ is the $d\times d$-identity matrix. We illustrate the approximation of the semigroup envelope via \eqref{eq:compmeth1} in the following two examples, where $a$ and $b$ are given by \eqref{eq:mata} and \eqref{eq:matb}. Again, we consider the case, where, $d=101$, $\delta=\frac{1}{10}$ and the maturity is $t=1$. In both examples, we choose $n=10$, i.e. we consider the partition $\pi_{10}$ with $t=1$, and use \eqref{eq:approxexp} with $k=10$ for the computation of $e^{h(q_0+\lambda q)}$ for $\lambda=\lambda_h,\lambda_l$.
 \begin{enumerate}
  \item[a)] As in Example \ref{ex:numode} a), let $q_0:=a$, $q:=b$, $\lambda_l:=-1$ and $\lambda_h:=1$. Again, we compute the price of a butterfly spread, which is given by \eqref{eq:butterfly} with $K=4$ and $L=5$.
     \begin{figure}
    \centering
     \includegraphics[width=14cm]{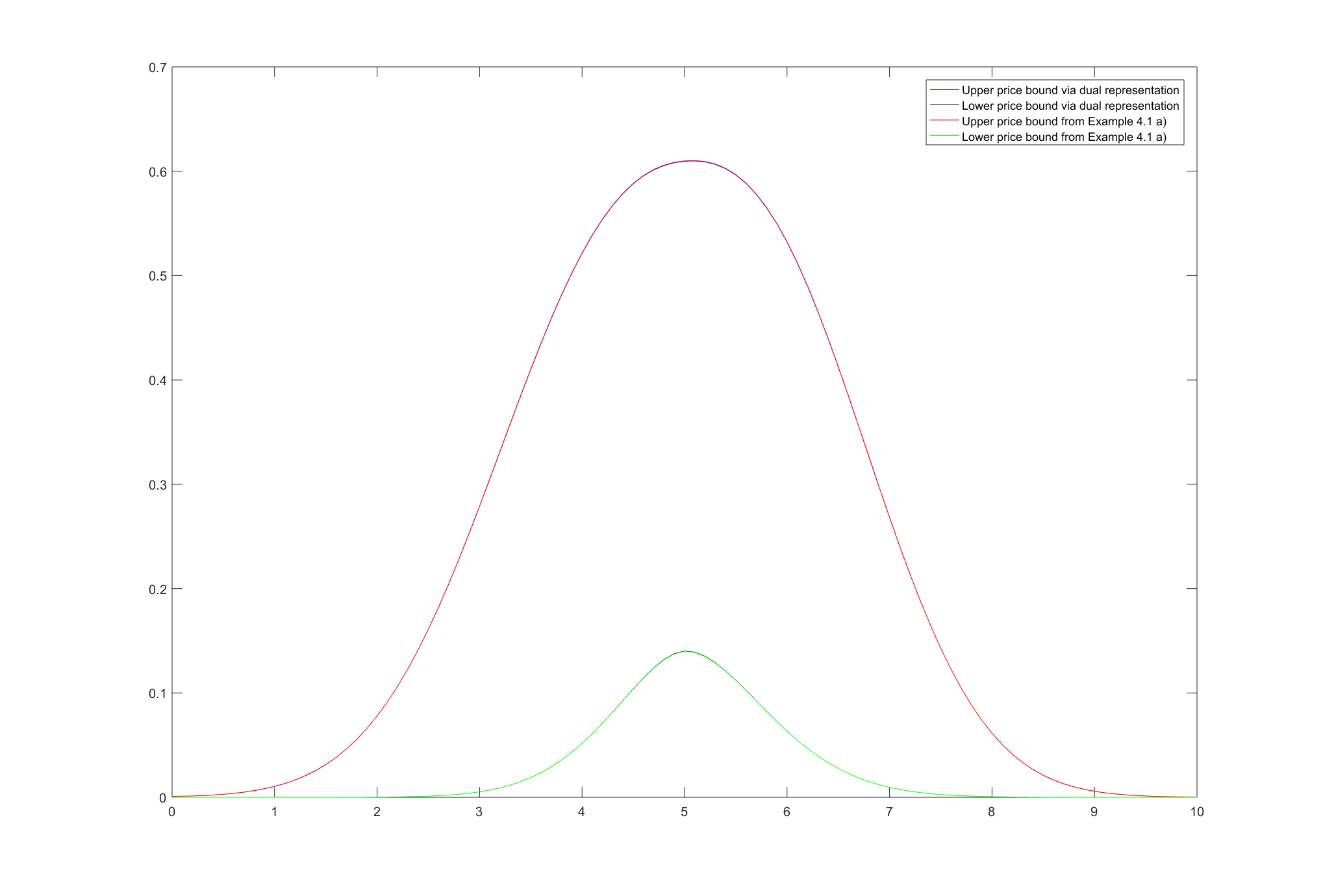}
     \caption{Upper and lower price bounds for a butterfly spread \eqref{eq:butterfly} with $K=4$ and $L=5$ under drift uncertainty from Example \ref{ex:numode} a) in red and green, respectively. In blue and black, the upper and lower price bounds, computed via \eqref{eq:compmeth1}, respectively.}
   \end{figure}
   In Figure 3, we see the upper and lower price curves from the previous example as well as the price bounds computed in this example. We observe that the price bounds match very well.
  \item[b)] We consider the case of an uncertain volatility parameter from Example \ref{ex:numode} b), i.e. let $q_0:=0$, $q:=a$, $\lambda_l:=0.5$ and $\lambda_h:=1.5$. As in Example \ref{ex:numode} b), we price a bull spread given by \eqref{eq:bullspread} with $K=4$ and $L=5$.
     \begin{figure}
    \centering
     \includegraphics[width=14cm]{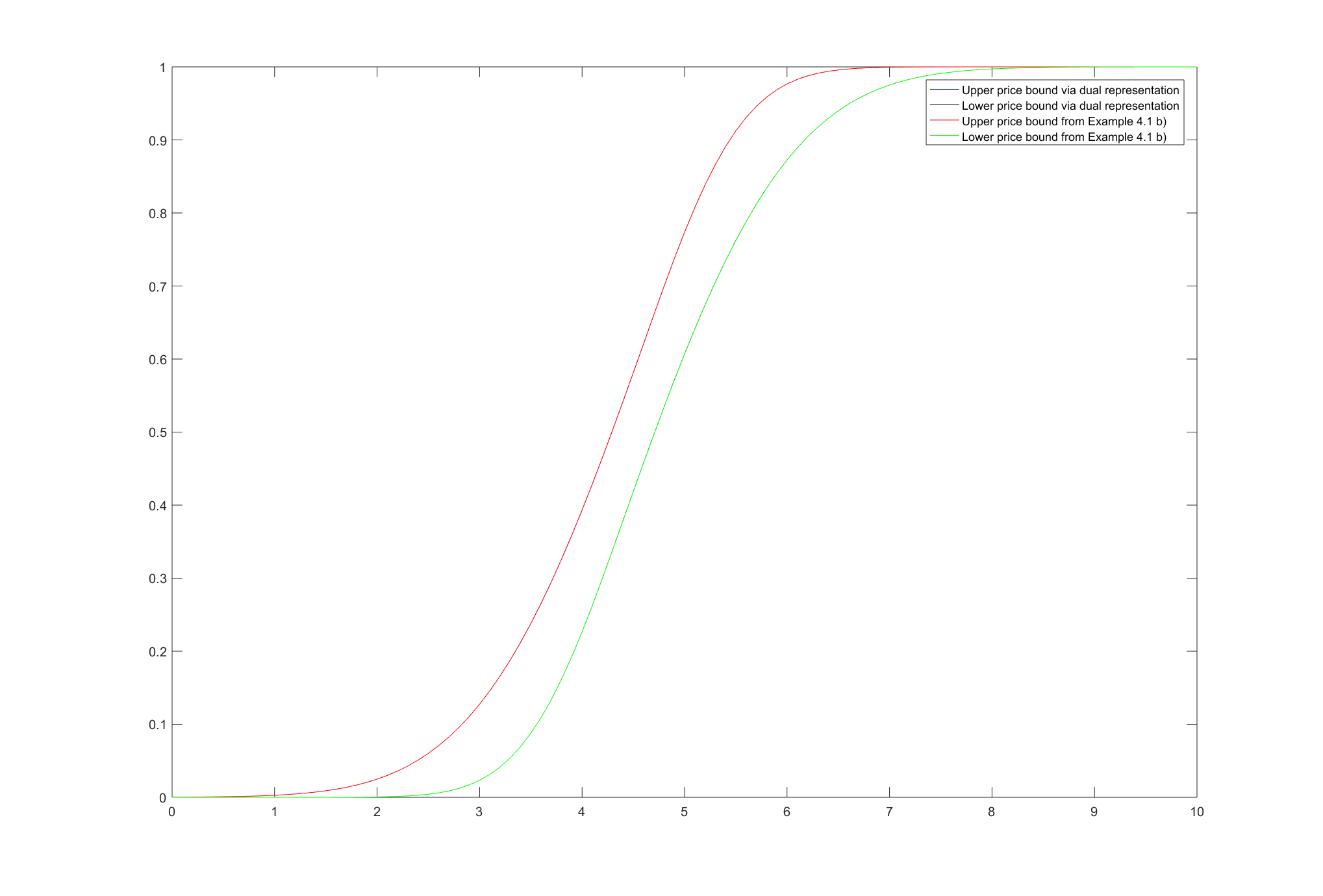}
     \caption{Upper and lower price bounds for a bull spread \eqref{eq:bullspread} with $K=4$ and $L=5$ under volatility uncertainty from Example \ref{ex:numode} b) in red and green, respectively. In blue and black, the upper and lower price bounds, computed via \eqref{eq:compmeth1}, respectively.}
   \end{figure}
   In Figure 4, we again depict the upper and lower price bounds from the previous example and this example. As in part a), we observe that the price bounds perfectly match.
 \end{enumerate}
\end{example}


\begin{thebibliography}{10}

\bibitem{MR1850791}
P.~Artzner, F.~Delbaen, J.-M. Eber, and D.~Heath.
\newblock Coherent measures of risk.
\newblock {\em Math. Finance}, 9(3):203--228, 1999.

\bibitem{MR3153589}
S.~Cerreia-Vioglio, F.~Maccheroni, M.~Marinacci, and A.~Rustichini.
\newblock Niveloids and their extensions: risk measures on small domains.
\newblock {\em J. Math. Anal. Appl.}, 413(1):343--360, 2014.

\bibitem{MR2446692}
S.~N. Cohen and R.~J. Elliott.
\newblock Solutions of backward stochastic differential equations on {M}arkov
  chains.
\newblock {\em Commun. Stoch. Anal.}, 2(2):251--262, 2008.

\bibitem{MR2594366}
S.~N. Cohen and R.~J. Elliott.
\newblock A general theory of finite state backward stochastic difference
  equations.
\newblock {\em Stochastic Process. Appl.}, 120(4):442--466, 2010.

\bibitem{MR1906435}
F.~Coquet, Y.~Hu, J.~M\'{e}min, and S.~Peng.
\newblock Filtration-consistent nonlinear expectations and related
  {$g$}-expectations.
\newblock {\em Probab. Theory Related Fields}, 123(1):1--27, 2002.

\bibitem{MR2535022}
G.~de~Cooman, F.~Hermans, and E.~Quaeghebeur.
\newblock Imprecise {M}arkov chains and their limit behavior.
\newblock {\em Probab. Engrg. Inform. Sci.}, 23(4):597--635, 2009.

\bibitem{MR2011534}
F.~Delbaen.
\newblock {\em Coherent risk measures}.
\newblock Cattedra Galileiana. [Galileo Chair]. Scuola Normale Superiore,
  Classe di Scienze, Pisa, 2000.

\bibitem{MR1929369}
F.~Delbaen.
\newblock Coherent risk measures on general probability spaces.
\newblock In {\em Advances in finance and stochastics}, pages 1--37. Springer,
  Berlin, 2002.

\bibitem{MR521810}
C.~Dellacherie and P.-A. Meyer.
\newblock {\em Probabilities and potential}, volume~29 of {\em North-Holland
  Mathematics Studies}.
\newblock North-Holland Publishing Co., Amsterdam-New York; North-Holland
  Publishing Co., Amsterdam-New York, 1978.

\bibitem{dkn2}
R.~Denk, M.~Kupper, and M.~Nendel.
\newblock A semigroup approach to nonlinear {L}\'evy processes.
\newblock {\em Preprint arXiv:1710.08130}, 2017.

\bibitem{MR3824739}
R.~Denk, M.~Kupper, and M.~Nendel.
\newblock Kolmogorov-type and general extension results for nonlinear
  expectations.
\newblock {\em Banach J. Math. Anal.}, 12(3):515--540, 2018.

\bibitem{MR1932379}
H.~F\"{o}llmer and A.~Schied.
\newblock Convex measures of risk and trading constraints.
\newblock {\em Finance Stoch.}, 6(4):429--447, 2002.

\bibitem{MR2779313}
H.~F{\"o}llmer and A.~Schied.
\newblock {\em Stochastic finance}.
\newblock Walter de Gruyter \& Co., Berlin, extended edition, 2011.
\newblock An introduction in discrete time.

\bibitem{frittelli2002putting}
M.~Frittelli and E.~Rosazza~Gianin.
\newblock Putting order in risk measures.
\newblock {\em Journal of Banking \& Finance}, 26(7):1473--1486, 2002.

\bibitem{MR1725607}
D.~J. Hartfiel.
\newblock {\em Markov set-chains}, volume 1695 of {\em Lecture Notes in
  Mathematics}.
\newblock Springer-Verlag, Berlin, 1998.

\bibitem{PengHu}
M.~Hu and S.~Peng.
\newblock {$G$}-l\'evy processes under sublinear expectations.
\newblock {\em Preprint}, 2009.

\bibitem{MR1873235}
N.~Jacob.
\newblock {\em Pseudo differential operators and {M}arkov processes. {V}ol.
  {I}}.
\newblock Imperial College Press, London, 2001.
\newblock Fourier analysis and semigroups.

\bibitem{MR3679158}
T.~Krak, J.~De~Bock, and A.~Siebes.
\newblock Imprecise continuous-time {M}arkov chains.
\newblock {\em Internat. J. Approx. Reason.}, 88:452--528, 2017.

\bibitem{roecknen}
M.~Nendel and M.~R\"ockner.
\newblock Upper envelopes of families of {F}eller semigroups and viscosity
  solutions to a class of nonlinear {C}auchy problems.
\newblock {\em Preprint arXiv:1906.04430}, 2019.

\bibitem{NutzNeuf}
A.~Neufeld and M.~Nutz.
\newblock Nonlinear {L}\'evy processes and their characteristics.
\newblock {\em Trans. Amer. Math. Soc.}, 369(1):69--95, 2017.

\bibitem{MR3290231}
M.~Nisio.
\newblock On a non-linear semi-group attached to stochastic optimal control.
\newblock {\em Publ. Res. Inst. Math. Sci.}, 12(2):513--537, 1976/77.

\bibitem{MR1600720}
J.~R. Norris.
\newblock {\em Markov chains}, volume~2 of {\em Cambridge Series in Statistical
  and Probabilistic Mathematics}.
\newblock Cambridge University Press, Cambridge, 1998.
\newblock Reprint of 1997 original.

\bibitem{MR2027088}
R.~Pelessoni and P.~Vicig.
\newblock Convex imprecise previsions.
\newblock volume~9, pages 465--485. 2003.
\newblock Special issue on dependable reasoning about uncertainty.

\bibitem{MR2139424}
R.~Pelessoni and P.~Vicig.
\newblock Uncertainty modelling and conditioning with convex imprecise
  previsions.
\newblock {\em Internat. J. Approx. Reason.}, 39(2-3):297--319, 2005.

\bibitem{MR2143645}
S.~Peng.
\newblock Nonlinear expectations and nonlinear {M}arkov chains.
\newblock {\em Chinese Ann. Math. Ser. B}, 26(2):159--184, 2005.

\bibitem{MR2397805}
S.~Peng.
\newblock {$G$}-expectation, {$G$}-{B}rownian motion and related stochastic
  calculus of {I}t\^{o} type.
\newblock In {\em Stochastic analysis and applications}, volume~2 of {\em Abel
  Symp.}, pages 541--567. Springer, Berlin, 2007.

\bibitem{MR2474349}
S.~Peng.
\newblock Multi-dimensional {$G$}-{B}rownian motion and related stochastic
  calculus under {$G$}-expectation.
\newblock {\em Stochastic Process. Appl.}, 118(12):2223--2253, 2008.

\bibitem{MR3287403}
V.~Vovk and G.~Shafer.
\newblock Game-theoretic probability.
\newblock In {\em Introduction to imprecise probabilities}, Wiley Ser. Probab.
  Stat., pages 114--134. Wiley, Chichester, 2014.

\bibitem{MR2556123}
D.~\v{S}kulj.
\newblock Discrete time {M}arkov chains with interval probabilities.
\newblock {\em Internat. J. Approx. Reason.}, 50(8):1314--1329, 2009.

\bibitem{MR3285527}
D.~\v{S}kulj.
\newblock Efficient computation of the bounds of continuous time imprecise
  {M}arkov chains.
\newblock {\em Appl. Math. Comput.}, 250:165--180, 2015.

\bibitem{MR1145491}
P.~Walley.
\newblock {\em Statistical reasoning with imprecise probabilities}, volume~42
  of {\em Monographs on Statistics and Applied Probability}.
\newblock Chapman and Hall, Ltd., London, 1991.

\end{thebibliography}
\bibliographystyle{abbrv}

\end{document}